\newcommand{\Ab}{\mathbf A}
\newcommand{\R}{\mathbb R}
\newcommand{\C}{\mathbb C}
\newcommand{\gse}{\mathrm E_{\rm gs}}
\DeclareMathOperator{\curl}{curl}
\newtheorem{thm}{Theorem}[section]
\newtheorem{lem}[thm]{Lemma}
\theoremstyle{remark}
\newtheorem{rem}[thm]{Remark}
\numberwithin{equation}{section}
\title[Energy of BEC]{Energy of a rotating Bose-Einstein condensate in a harmonic trap}
\author[A. Kachmar]{Ayman Kachmar$^*$}
\thanks{$^*$ Lebanese University, Department of Mathematics, Hadat, Lebanon, and Lebanese International University, School of Arts and Sciences, Beirut, Lebanon.
Email: {\tt ayman.kashmar@liu.edu.lb}}
\date{May 15, 2013}
\begin{document}

\begin{abstract}
The state of a rotating Bose-Einstein condensate in a harmonic trap
is modeled by a wave function  that minimizes the Gross-Pitaevskii
functional. The resulting minimization problem  has two new features
compared to other similar functionals arising in condensed matter
physics, such as the Ginzburg-Landau functional. Namely, the wave
function is defined in all the plane and is  normalized relative to
the $L^2$-norm.
This paper deals with the situation when the coupling constant tends
to $0$ (Thomas-Fermi regime) and the rotation speed is large
compared with the first critical speed.
 It is
given the leading order estimate of the ground state energy together
with the location of the vortices of the minimizing wave function in
the bulk of the condensate. When the rotation speed is inversely
proportional to the coupling constant, the condensate is confined in
an elliptical region  whose conjugate  diameter shrinks and whose
transverse diameter expands  as the rotation speed increases.
\end{abstract}

\maketitle

\section{Introduction}

The analysis of energy functionals modeling rotating Bose-Einstein
condensation is currently an important field of mathematical
physics. A lot of mathematical papers addressed several questions
related to this physical phenomenon. In \cite{LSY, BCP}, it is
proved that the Gross-Pitaevskii frame work is a valid approximation
of the $N$-body model of rotating Bose-Einstein condensation. The
monograph \cite{Af} contains original results as well as many open
questions regarding various models in the subject (see also the
papers \cite{AAB, AD, AJ} and the references therein). A series of
important contributions (\cite{CY, Retal} and references therein)
contain a deep analysis that describes the various critical speeds
of rotating Bose-Einstein condensates in {\it anharmonic} traps.

When the atoms of the condensate are confined in a {\it harmonic}
trap, the Gross-Pitaevskii functional to study is:
\begin{equation}\label{eq:GP}
F_\varepsilon(u)=\int_{\R^2}\left(|(\nabla-i\Omega\Ab_0)u|^2+\frac1{2\varepsilon^2}\Big([a(x)-|u|^2]^2-[a_-(x)]^2\Big)
-\frac{\Omega^2}{4}|x|^2|u|^2\right)\,dx\,.
\end{equation}
The functional in \eqref{eq:GP} is defined for functions satisfying
the {\it mass constraint},
\begin{equation}\label{eq:mc}
\int_{\R^2}|u|^2\,dx=1\,.
\end{equation}
The parameter $\varepsilon>0$ is the coupling constant;
$\varepsilon$ is the ratio of two characteristic lengths. The
parameter $\Omega$ measures the rotational speed,
$\Ab_0(x)=x^\bot/2=(-x_2/2,x_1/2)$, $a(x)=a_0-|x|_\Lambda^2$,
$a_0=\sqrt{2\Lambda/\pi}$, $|x|_\Lambda=\sqrt{x_1^2+\Lambda^2
x_2^2}$\,.

The parameter $\Lambda\in(0,1]$ is fixed as well as the term $a_0$
in the function $a$. The choice of the term $a_0$ forces the
function $a$ to satisfy the normalization condition
$\displaystyle\int_{\R^2}\big(a(x))_-\,dx=1$.

The form of the functional given in \eqref{eq:GP} is adequate to
apply the techniques developed for the Ginzburg-Landau functional.
In non-dimensional units, the functional  that appears in the
physical literature is actually the sum of three terms: the kinetic
energy, the potential energy and the non-linear interaction term
(see e.g. \cite{MCDW}),
\begin{equation}\label{eq:GP-phys}
F_\varepsilon(u)=\int_{\R^2}\left(|\nabla u|^2+\frac1{2\varepsilon^2}\Big([a(x)-|u|^2]^2-[a_-(x)]^2\Big)
-\Omega\, x^\bot\cdot (iu,\nabla u)\right)\,dx\,.
\end{equation}
In the regime $\varepsilon\ll1$ and $\varepsilon\Omega\ll1$, the
condensate is confined in the region
\begin{equation}\label{eq:D}
\mathcal D=\{x\in\R^2~:~a(x)>0\}\,.
\end{equation}

The ground state energy is:
\begin{equation}\label{eq-gse}
\gse(\varepsilon,\Omega)=\inf\,\{\,F_\varepsilon(u)~:~u\in H^1(\R^2)\,,~|x|^2u\in L^2(\R^2)~\&~\int_{\R^2}|u|^2\,dx=1\,\}\,.
\end{equation}
The minimization problem in \eqref{eq-gse} is studied in \cite{IM}
when $\varepsilon\to0_+$ and $\Omega\approx|\ln\varepsilon|$. Among
other things, it is found a critical speed
$\Omega_c=\omega_c|\ln\varepsilon|$ such that minimizers start to
have zeros when $\Omega>\Omega_c$. In this paper, the focus will be
on the regime when $\varepsilon\to0_+$ and $\Omega\gg \Omega_c$.
Part of the results of this paper are qualitatively very similar to
those of \cite{CY, CPRY, CRY} where  {\it flat} and {\it anharmonic}
traps are treated. However,  a regime in the {\it harmonic} trap
discussed in this paper seems to display a new behavior of the
concentration of the condensate's wave function. This is explicitly
discussed in Remark~\ref{rem:concentration} below.

It is established in \cite[Prop.~3.1]{IM} that there is a minimizer
of the problem \eqref{eq-gse} when $\Omega<2\Lambda/\varepsilon$.
The functional in \eqref{eq-gse} is {\it not} bounded from below
when $\Omega>2\Lambda/\varepsilon$.

Setting $\Omega=0$ into the {\it magnetic term } in $F_\varepsilon$,
it is obtained the reduced functional:
\begin{equation}\label{eq:E}
E_{\varepsilon,\Omega}(u)=\int_{\R^2}\left(|\nabla u|^2+\frac1{2\varepsilon^2}\Big([a(x)-|u|^2]^2-[a_-(x)]^2\Big)-\Omega^2\frac{|x|^2}4|u|^2\right)\,dx\,.
\end{equation}
The ground state energy of this functional is:
\begin{equation}\label{eq-gse:E}
e_{\varepsilon,\Omega}=\inf\,\{E_\varepsilon(u)~:~u\in H^1(\R^2)\,,~|x|^2u\in L^2(\R^2)~\&~\int_{\R^2}|u|^2\,dx=1\,\}\,.
\end{equation}
The reduced functional in \eqref{eq:E} is studied in
\cite[Thm.~2.2]{IM} when $\Omega=0$, where  it is established that
\eqref{eq-gse:E} has a positive minimizer
$\widetilde\eta_{\varepsilon}$. In Section~\ref{sec:p}, it will be
constructed a positive minimizer
$\widetilde\eta_{\varepsilon,\Omega}$ of  the functional in
\eqref{eq:E}.  Following an idea of \cite{LM} and writing
$u=\widetilde\eta_{\varepsilon,\Omega} v$, there holds the following
decomposition:

\begin{equation}\label{eq:FG}
F_\varepsilon(u)=E_\varepsilon(\widetilde\eta_\varepsilon)+\mathcal
G_\varepsilon(v)\,,\end{equation} with
\begin{equation}\label{eq:G}
\mathcal G_\varepsilon(v)=\int_{\R^2}\left(\widetilde\eta_{\varepsilon,\Omega}^2|(\nabla-i\Omega\Ab_0)v|^2+\frac{\widetilde\eta_{\varepsilon,\Omega}^4}{2\varepsilon^2}(1-|v|^2)^2
\right)\,dx\,.
\end{equation}
Also, if $u$ is selected as a minimizer of \eqref{eq-gse}, then $v$
will be a minimizer of $\mathcal G_\varepsilon$  under the {\it
weighted} mass constraint,
\begin{equation}\label{eq:mmc}
\int_{\R^2}\widetilde\eta_{\varepsilon,\Omega}^2|v|^2\,dx=1\,.
\end{equation}
More precisely, the minimization problem \eqref{eq-gse} is
equivalent to
\begin{equation}\label{eq-gse:G}
C_0(\varepsilon,\Omega)=\inf\,\{\mathcal G_\varepsilon(v)~:~v\in H^1(\R^2)\,,~\widetilde\eta_{\varepsilon,\Omega}|x|v\in L^2(\R^2)~\&~\int_{\R^2}
\widetilde\eta_{\varepsilon,\Omega}^2|v|^2\,dx=1\,\}\,.
\end{equation}

The main theorem of this paper is:

\begin{thm}\label{thm:K}
Let  $M\in(0,2\Lambda)$ and $b:(0,1)\to (0,\infty)$ satisfies
$\displaystyle\lim_{\varepsilon\to0_+}b(\varepsilon)=\infty$.
Suppose that the rotational speed satisfies:
$$b(\varepsilon)|\ln\varepsilon|\leq\Omega\leq
\frac{M}{\varepsilon}\,,\quad\big(\varepsilon\in(0,1)\big)\,.
$$ There exist a constant $\varepsilon_0>0$ and a function ${\rm err}:(0,\varepsilon_0]\to\R$ such that,
$$\lim_{\varepsilon\to0_+}{\rm err}(\varepsilon)=0\,,$$
and
\begin{equation}\label{eq:K}
\gse=e_{\varepsilon,\Omega}+\Omega\left[\ln\frac1{\varepsilon\sqrt{\Omega}}\right]\Big(1+{\rm err}(\varepsilon)\Big)\,,\quad \big(\varepsilon\in(0,\varepsilon_0)\big)\,.
\end{equation}
Here $\gse$ is introduced in \eqref{eq-gse} and
$e_{\varepsilon,\Omega}$ in \eqref{eq-gse:E}.
\end{thm}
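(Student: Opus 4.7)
The natural strategy is to exploit the decomposition \eqref{eq:FG}--\eqref{eq:G}: since $e_{\varepsilon,\Omega}=E_{\varepsilon,\Omega}(\widetilde\eta_{\varepsilon,\Omega})$, the theorem reduces to proving
\begin{equation*}
C_0(\varepsilon,\Omega) \;=\; \Omega\,\ln\tfrac{1}{\varepsilon\sqrt{\Omega}}\bigl(1+{\rm err}(\varepsilon)\bigr).
\end{equation*}
Now $\mathcal G_\varepsilon$ is a weighted Ginzburg--Landau type functional with weight $\widetilde\eta_{\varepsilon,\Omega}^2$ essentially supported on the effective Thomas--Fermi ellipse, so I would establish matching upper and lower bounds by adapting the vortex analysis of \cite{CY, CPRY, CRY, IM} from flat and anharmonic traps to the harmonic setting.

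For the upper bound I would build a trial function $v=\chi\prod_j f_j\,e^{i\varphi}$ where the product runs over a regular lattice $\{a_j\}$ inside the bulk of $\mathcal D$, each $f_je^{i\varphi_j}$ is a standard degree-one Ginzburg--Landau vortex core regularized at scale $\varepsilon$, and $\varphi$ is a global phase compensating the rotation-induced gauge, chosen as in \cite{AJ, CPRY}. The lattice spacing would be taken of order $\Omega^{-1/2}$ so that the vortex density equals $\Omega/\pi$; this choice is what produces $\sqrt{\Omega}$ inside the logarithm. A direct computation, combined with the weighted normalization \eqref{eq:mmc}, then yields $\mathcal G_\varepsilon(v) \le \Omega\ln\tfrac{1}{\varepsilon\sqrt\Omega}(1+o(1))$.

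For the lower bound, I would take a minimizer $v$ of $C_0(\varepsilon,\Omega)$, restrict to the bulk set $\{\widetilde\eta_{\varepsilon,\Omega}^2\ge \delta\}$, and apply a weighted Jerrard--Sandier vortex-ball construction (as used in \cite{CY, CRY, IM}). For the positive part of $\mathcal G_\varepsilon$ this produces a bound of the form $2\pi\sum_i \widetilde\eta^2(a_i)|d_i|\ln\tfrac{1}{\varepsilon\sqrt\Omega}(1-o(1))$; the magnetic cross-term coming from expanding $|(\nabla-i\Omega\Ab_0)v|^2$ is handled by a Jacobian estimate relating the vorticity measure $\curl(iv,\nabla v)$ to the optimal density $\Omega/\pi$. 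Minimizing over the vortex configuration then reproduces the upper bound and closes the argument.

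The main obstacle will be the uniform control of $\widetilde\eta_{\varepsilon,\Omega}$ as $\varepsilon\Omega$ varies in $(0,M]$ with $M<2\Lambda$: precise pointwise and Sobolev estimates on the Thomas--Fermi profile, non-degeneracy on compact subsets of the effective domain $\mathcal D_\Omega$, and exponential decay across $\partial\mathcal D_\Omega$ are needed, since the confinement ellipse degenerates (one semi-axis shrinks to zero) as $\varepsilon\Omega\to 2\Lambda$. These properties, to be set up in Section~\ref{sec:p}, are precisely what allows the weighted vortex analysis to run uniformly. A secondary technical point is absorbing the remainders from the cut-offs, from the weighted renormalization \eqref{eq:mmc}, and from replacing $\int\widetilde\eta^2$ by $1$ into a single error ${\rm err}(\varepsilon)$ valid over the full range $b(\varepsilon)|\ln\varepsilon|\le\Omega\le M/\varepsilon$.
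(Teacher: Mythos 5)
Your overall strategy coincides with the paper's at the top level: the same reduction via the decomposition \eqref{eq:FG} to the single estimate $C_0(\varepsilon,\Omega)=\Omega\ln\frac{1}{\varepsilon\sqrt{\Omega}}\bigl(1+o(1)\bigr)$ (this is exactly Remark~\ref{rem:proof}), a vortex lattice at spacing of order $\Omega^{-1/2}$ for the upper bound, and uniform control of the Thomas--Fermi profile $\widetilde\eta_{\varepsilon,\Omega}$ as $\varepsilon\Omega$ ranges over $(0,M]$, which is precisely the content of Section~\ref{sec:p}. Where you genuinely diverge is in the implementation of the two bounds. The paper never runs a \emph{weighted} vortex analysis: it covers the bulk by squares $\mathcal K_j$ of mesoscopic side $(\ell\sqrt{\Omega})^{-1}$ with $\ell$ as in \eqref{eq:lh}, freezes the weight $p_{\varepsilon\Omega}(x_j)$ on each square using the pointwise bounds of Theorem~\ref{thm:IM0'} and the mean value theorem, rescales each square to the unit square, and then quotes the flat simplified Ginzburg--Landau asymptotics $m_{\rm p}(h_{\rm ex},\varepsilon)$, $m_0(h_{\rm ex},\varepsilon)=h_{\rm ex}\ln\frac{1}{\varepsilon\sqrt{h_{\rm ex}}}\bigl(1+o(1)\bigr)$ of Theorem~\ref{thm-SS-p} (from \cite{K-3D}, built on \cite{SS,SS00}); the uniformity of that result in the coupling $\lambda\in[\lambda_1,\lambda_2]$ is what legitimizes freezing the weight into the quartic term. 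Finally it sums Riemann sums using the normalization $\int_{p_{\varepsilon\Omega}>0}p_{\varepsilon\Omega}\,dx=1$. Your plan instead re-derives the content of Theorem~\ref{thm-SS-p} in weighted form via a Jerrard--Sandier ball construction plus a Jacobian estimate, the route of \cite{CY,CRY,IM}. That is viable but heavier; the paper's square decomposition confines all the hard vortex machinery to a quoted flat-square result (and its upper-bound trial function, Lemma~\ref{lem:test-fuc}, is itself the Aydi--Sandier periodic lattice \cite{AyS}, close in spirit to your product construction, transplanted by the scaling $x\mapsto\ell\sqrt{\Omega}\,x$ and a cut-off).

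One numerical point in your sketch must be fixed: the optimal vortex density is $\Omega/2\pi$, not $\Omega/\pi$, since the winding must satisfy $2\pi n=\curl(\Omega\Ab_0)=\Omega$ for the collective phase to compensate the gauge term. Taken literally, density $\Omega/\pi$ doubles the self-energy of the trial state to $2\Omega\ln\frac{1}{\varepsilon\sqrt{\Omega}}$ and, worse, leaves an uncompensated effective field of order $\Omega$ over the unit-scale bulk, costing an additional $\mathcal O(\Omega^2)\gg\Omega\ln\frac{1}{\varepsilon\sqrt{\Omega}}$ throughout the regime $b(\varepsilon)|\ln\varepsilon|\leq\Omega\leq M/\varepsilon$, which would destroy the matching with the lower bound. (In the paper this bookkeeping is hidden in the choice $N\leq\sqrt{h_{\rm ex}/2\pi}<N+1$ inside the proof of Lemma~\ref{lem:test-fuc}.) With this constant corrected, your two bounds match at leading order and the argument closes as in the paper.
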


\begin{rem}\label{rem:proof}
In light of the decomposition in \eqref{eq:FG}, the proof of
Theorem~\ref{thm:K} is done by establishing that:
$$C_0(\varepsilon,\Omega)=\Omega\left[\ln\frac1{\varepsilon\sqrt{\Omega}}\right]\Big(1+{\rm
err}(\varepsilon)\Big)\,.$$
\end{rem}

\begin{rem}\label{rem:concentration} ({\bf Bulk of the
condensate})\\
In Section~\ref{sec:p}, it will be shown that the function
$\widetilde\eta_{\varepsilon,\Omega}$ is concentrated in the region
$$
\mathcal D_{\varepsilon\Omega}=\{x\in\R^2~:~\alpha_{\varepsilon\Omega}-|x|^2_{\widetilde
\Lambda_{\varepsilon\Omega}}>0\}\,,
$$
where
$$
\alpha_{\varepsilon\Omega}=a_0\left(\frac{1-\frac{\varepsilon^2\Omega^2}{4\Lambda^2}}{1-\frac{\varepsilon^2\Omega^2}{4}}\right)^{1/4}
\quad{\rm and}\quad
\widetilde\Lambda_{\varepsilon\Omega}=\Lambda\left(\frac{1-\frac{\varepsilon^2\Omega^2}{4\Lambda^2}}{1-\frac{\varepsilon^2\Omega^2}4}\right)^{1/2}\,.$$
It is worthy to discuss the form of the region $\mathcal D_{\varepsilon\Omega}$ in the various existing regimes. In the isotropic case $\Lambda=1$, the region $\mathcal D_{\varepsilon\Omega}$ is independent of $\varepsilon\Omega$,
$$\mathcal D_{\varepsilon\Omega}=\mathcal D=\{x\in\R^2~:~a(x)>0\}\,.$$
In the non-isotropic case, $0<\Lambda<1$, one observes an interesting behavior. If $\varepsilon\Omega\ll1$, then the region $\mathcal
D_{\varepsilon\Omega}$ occupies  $\mathcal D$.

This region shrinks along the $x_1$-axis and expands along the
$x_2$-axis as $\varepsilon\Omega$ increases. If
$\Omega=M/\varepsilon$ and $M\in(0,2\Lambda)$, then as
$M\to2\Lambda$, the region $\mathcal D_{\varepsilon\Omega}$
approaches the following region
$$
\mathcal D_{2\Lambda}=\{0\}\times\R\,.
$$
It seems that this kind of bahavior of the `bulk' of the condensate is new comapred to the existing behavior for anharmonic and flat traps.
\end{rem}

\begin{rem}\label{rem:confinement} ({\bf Concentration of the condensate's wave
function})\\
Let $\delta>0$ and $\mathcal N_{\delta}=\{x\in\mathcal
D_{\varepsilon\Omega}~:~\alpha_{\varepsilon\Omega}-|x|^2_{\widetilde\Lambda_{\varepsilon\Omega}}>\delta\}$.
A simple consequence of the energy asymptotics in
Remark~\ref{rem:proof} and the discussion in
Remark~\ref{rem:concentration} is that any minimizer
$u=\widetilde\eta_{\varepsilon,\Omega}\,v$ of the functional in
\eqref{eq:GP} satisfies,
$$|v|= \left|\frac{u}{\widetilde\eta_{\varepsilon,\Omega}}\right|\to1\quad{\rm in }~L^2\big(\mathcal N_{\delta}\big)\,.$$
Since the functions $u$ and $\widetilde\eta_{\varepsilon,\Omega}$
are normalized in $L^2$, then  the function $u$ satisfies
$$\int_{\mathcal N_\delta}|u|^2\,dx=1+\mathcal O(\delta)\quad{\rm
and}\quad\int_{\R^2\setminus\mathcal N_\delta}|u|^2\,dx=\mathcal O
(\delta)\,,$$ for sufficiently small values of $\delta$. Note that
the behavior of $\widetilde\eta_{\varepsilon\Omega}$ described in
Theorem~\ref{thm:IM0'} is used.
\end{rem}

\begin{rem}
Along the proof of Theorem~\ref{thm:K}, one gets information about
the qualitative behavior of the minimizers. More precisely, it is
possible to get information about  the arrangement of vortices. This
is discussed in Section~\ref{sec:v}.
\end{rem}

\begin{rem}
The letter $C$ denotes a positive constant independent of
$\varepsilon$ and $\Omega$, and whose value is not the same when
seen in different  formulas. The quantity $\mathcal O(B)$ is any
expression that remains in the interval $(-C|B|,C|B|)$. Writing
$A\ll B$ means that $A=\delta B$ and $\delta\to0$. The meaning of
$A\approx B$ is that $A$ is bounded between $c_1B$ and $c_2B$ with
$c_1$ and $c_2$ being positive constants.
\end{rem}

\section{Preliminaries}\label{sec:p}

Some basic properties of the positive minimizer
$\widetilde\eta_{\varepsilon,\Omega}$ of \eqref{eq-gse:E} as well as
of minimizers of the modified problem \eqref{eq-gse:G} will be used
along the  proof of Theorem~\ref{thm:K}. These properties are
recalled here.

\subsection{The unconstrained problem}

The first step is to study the minimization of \eqref{eq:E} without
the mass constraint. The results here are given in \cite{IM} but for
a slightly more particular case on the potential $\widetilde a(x)$
defined below. The proofs here are identically the same as in
\cite{IM} and are not repeated.

Consider the potential
$$\widetilde a(x)=\widetilde a_0-|x|_{\widetilde\Lambda}^2=\widetilde a_0-x_1^2-\widetilde\Lambda^2x_2^2\,,\quad\Big(x=(x_1,x_2)\in\R^2\Big)\,,$$
where $\widetilde a_0$ and $\widetilde\Lambda$ are positive
parameters. The parameters $\widetilde a_0$ and $\widetilde \Lambda$
may depend on $\varepsilon$ and $\Omega$ but they should remain
bounded between two positive constants $c_1$ and $c_2$ that are
independent of $\varepsilon$ and $\Omega$. The results in this
section are valid under this last assumption.

Consider the functional
\begin{equation}\label{eq:E'}
\widetilde E_{\varepsilon}(u)=\int_{\R^2}\left(|\nabla u|^2+\frac1{2\varepsilon^2}\Big([\widetilde a(x)-|u|^2]^2-[\widetilde a_-(x)]^2\Big)\right)\,dx\,.
\end{equation}
The functional in \eqref{eq:E'} will be minimized over
configurations in the space
$$\mathcal H=\{u\in H^1(\R^2)~:~|x|^2u\in L^2(\R^2)\}\,.$$
The proof of Theorem~\ref{thm:IM0} below is given  in
\cite[Proposition~2.1]{IM}.


\begin{thm}\label{thm:IM0}
There exist two positive constants $\varepsilon_0>0$ and $C>0$ such
that, if $\varepsilon\in(0,\varepsilon_0)$, then there is a
real-valued minimizer $\eta_\varepsilon=\eta_{\varepsilon,\widetilde
a}\in \mathcal H$ of \eqref{eq:E'}   satisfying:
\begin{enumerate}
\item $E_\varepsilon(\eta_\varepsilon)\leq
C|\ln\varepsilon|$ and $\eta_\varepsilon>0$ in $\R^2$\,;
\item $\eta_\varepsilon$ is the unique solution of
$$-\Delta\eta_\varepsilon=\frac1{\varepsilon^2}\big(\widetilde a-
\eta_\varepsilon^2)\eta_\varepsilon\quad{\rm and}\quad \eta_\varepsilon>0\quad {\rm in~}\R^2\,.$$
\item $\eta_\varepsilon(x)\leq
C\varepsilon^{1/3}\exp\big(\widetilde
a(x)/(4\varepsilon^{2/3})\big)$ if $|x|_{\widetilde
\Lambda}\geq\sqrt{\widetilde a_0^{\,}}$\,;
\item $(1-C\varepsilon^{1/3})\sqrt{\widetilde a(x)}\leq \eta_\varepsilon(x)\leq
\sqrt{\widetilde a(x)}$ if $|x|_{\widetilde\Lambda}\leq
\sqrt{\widetilde a_0^{\,}}-\varepsilon^{1/3}$\,.
\item $\eta_\varepsilon(x)\leq C\varepsilon^{1/3}$ if $
\sqrt{\widetilde a_0^{\,}}-\varepsilon^{1/3}\leq|x|_{\widetilde
\Lambda}\leq\sqrt{\widetilde a_0^{\,}}$\,.
\end{enumerate}
%
\end{thm}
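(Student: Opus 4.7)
The plan is to follow the variational strategy of \cite{IM}. The nonlinear potential $\frac1{2\varepsilon^2}\bigl([\widetilde a-|u|^2]^2-[\widetilde a_-]^2\bigr)$ is pointwise nonnegative (it equals $(\widetilde a-|u|^2)^2/(2\varepsilon^2)$ where $\widetilde a\geq 0$, and $(|u|^4+2|\widetilde a|\,|u|^2)/(2\varepsilon^2)$ where $\widetilde a<0$), so $\widetilde E_\varepsilon\geq 0$ and any minimizing sequence is bounded in $H^1(\R^2)$, with the quadratic weight $|\widetilde a|\,|u|^2$ providing the decay needed for tightness at infinity. Extraction of a weak limit together with the replacement $u\mapsto|u|$ yields a nonnegative minimizer. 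The Euler--Lagrange equation $-\Delta\eta_\varepsilon=\varepsilon^{-2}(\widetilde a-\eta_\varepsilon^2)\eta_\varepsilon$, combined with elliptic regularity and the strong maximum principle, gives $\eta_\varepsilon\in C^\infty(\R^2)$ and $\eta_\varepsilon>0$. Uniqueness among positive critical points follows from strict convexity of $\widetilde E_\varepsilon$ in the density $\rho=|u|^2$.

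For the energy bound (1), I would plug in a regularized Thomas-Fermi profile $\widetilde\eta_{\rm TF}=\sqrt{\widetilde a_+}$ smoothed in a tubular neighborhood of $\{\widetilde a=0\}$ of width $\varepsilon^{1/3}$. The potential contribution is $\mathcal O(1)$, while $|\nabla\sqrt{\widetilde a_+}|^2$ integrated over the layer produces a $|\ln\varepsilon|$ contribution, yielding $\widetilde E_\varepsilon(\eta_\varepsilon)\leq \widetilde E_\varepsilon(\widetilde\eta_{\rm TF})\leq C|\ln\varepsilon|$.

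The pointwise estimates (3)--(5) form the technically delicate part. The bulk upper bound $\eta_\varepsilon\leq\sqrt{\widetilde a}$ in (4) comes from applying the maximum principle to $w=\eta_\varepsilon^2-\widetilde a$, which, after using the PDE, satisfies a linear differential inequality with nonnegative zeroth-order coefficient. The matching lower bound in (4) is obtained by comparison with a subsolution built as a lower-order correction of $\sqrt{\widetilde a}$; the scale $\varepsilon^{1/3}$ is forced by balancing the curvature term $|\nabla^2\sqrt{\widetilde a}|$ against $\widetilde a/\varepsilon^2$ near the boundary of the Thomas-Fermi region. Estimate (5) follows by propagating (4) at $|x|_{\widetilde\Lambda}=\sqrt{\widetilde a_0}-\varepsilon^{1/3}$ through a further comparison. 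Finally, the exterior decay (3) is an Agmon-type bound: on $\{\widetilde a<0\}$ the PDE gives $-\varepsilon^2\Delta\eta_\varepsilon+|\widetilde a|\,\eta_\varepsilon\leq 0$, and comparison with the explicit supersolution $C\varepsilon^{1/3}\exp\bigl(\widetilde a/(4\varepsilon^{2/3})\bigr)$, whose exponent is tuned to solve the linearized Airy-type problem in the boundary layer, closes the argument. The main obstacle I anticipate is the careful construction of matched barriers across the three zones (deep bulk, boundary layer of width $\varepsilon^{1/3}$, and exterior), with all constants tracked so as to depend only on the uniform bounds $c_1,c_2$ for $\widetilde a_0$ and $\widetilde\Lambda$, as required by the statement.
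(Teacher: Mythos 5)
First, a point of calibration: the paper does not actually prove this theorem. It states that ``the proofs here are identically the same as in \cite{IM} and are not repeated,'' deferring entirely to \cite[Proposition~2.1]{IM}. So the relevant comparison is with that standard argument, and your outline reconstructs exactly its skeleton: direct method (your pointwise nonnegativity computation for the potential term is correct, and the growth of $-\widetilde a$ at infinity does give tightness, since $\mathcal H$ embeds compactly in $L^2(\R^2)$), the modulus trick and strong maximum principle for positivity, hidden convexity of the energy in $\rho=|u|^2$ for uniqueness, and matched sub/supersolutions for (3)--(5), including the correct observation that on $\{\widetilde a<0\}$ the equation yields $-\varepsilon^2\Delta\eta_\varepsilon+|\widetilde a|\,\eta_\varepsilon\le 0$. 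In structure you are on the same route as the cited source.

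There is, however, one step that fails as written: the trial function for assertion (1). If you smooth $\sqrt{\widetilde a_+}$ in a collar of width $\tau=\varepsilon^{1/3}$ around $\{\widetilde a=0\}$, then, since $|\nabla\widetilde a|$ is of order $1$ there and hence $\widetilde a\sim s$ at distance $s$ from the Thomas--Fermi boundary, the potential term costs at least $\int_0^\tau s^2\varepsilon^{-2}\,ds\approx \tau^3/\varepsilon^2=\varepsilon^{-1}$, not $\mathcal O(1)$; your construction would only give $\widetilde E_\varepsilon(\eta_\varepsilon)\le C\varepsilon^{-1}$, far weaker than the claimed $C|\ln\varepsilon|$. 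The correct collar width is the Airy scale $\tau=\varepsilon^{2/3}$ (equivalently, cutting where $\widetilde a\sim\varepsilon^{2/3}$): then $\tau^3/\varepsilon^2=\mathcal O(1)$ while $\int_{\{\widetilde a>\tau\}}|\nabla\sqrt{\widetilde a}|^2\approx C\int_\tau^1 s^{-1}\,ds=\mathcal O(|\ln\varepsilon|)$. Your own heuristic, done correctly, produces this scale: balancing $\varepsilon^2|\Delta\sqrt{\widetilde a}|\sim\varepsilon^2 s^{-3/2}$ against $\widetilde a^{3/2}\sim s^{3/2}$ gives $s\sim\varepsilon^{2/3}$, with $\varepsilon^{1/3}=\sqrt{\varepsilon^{2/3}}$ being the \emph{amplitude} scale --- consistent with the exponent $\varepsilon^{2/3}$ in (3) and the bound $C\varepsilon^{1/3}$ in (5), whereas a distance cut at $\varepsilon^{1/3}$ would put $\eta_\varepsilon\sim\varepsilon^{1/6}$ at the edge, clashing with (5). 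A smaller caveat: the maximum principle for $w=\eta_\varepsilon^2-\widetilde a$ cannot be run on all of $\R^2$, since $w\to+\infty$ at infinity and $w>0$ on $\partial\{\widetilde a>0\}$; the inequality $\Delta w\ge 2\varepsilon^{-2}\eta_\varepsilon^2\,w$ (valid since $\Delta\widetilde a<0$) must be localized to the inner region with boundary data controlled by the layer estimate --- precisely the matched-barrier bookkeeping you anticipate in your final sentence, so this is an omission of detail rather than of idea.
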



\subsection{The constrained problem}

This section is devoted to the construction of a positive minimizer
of the constrained problem in \eqref{eq-gse:E}.

A standard compactness argument shows the existence of a minimizer
$u_{\varepsilon,\Omega}$ of \eqref{eq-gse:E}. The details are given
in \cite{IM}. Since $\big|\nabla|u_{\varepsilon,\Omega}|\,\big|\leq
|\nabla u_{\varepsilon,\Omega}|$, then $|u_{\varepsilon,\Omega}|$ is
a minimizer of \eqref{eq-gse:E} too. This discussion leads to the
existence of a positive minimizer
$\widetilde\eta_{\varepsilon,\Omega}=|u_{\varepsilon,\Omega}|$ of
\eqref{eq-gse:E}. The Euler-Lagrange equation satisfied by
$\widetilde\eta_{\varepsilon,\Omega}$ is,
$$-\Delta\widetilde\eta_{\varepsilon,\Omega}=
\frac1{\varepsilon^2}\big(k_\varepsilon\varepsilon^2+V_{\varepsilon\Omega}-\widetilde\eta_{\varepsilon,\Omega}^2\big)\widetilde\eta_{\varepsilon,\Omega}\,,$$
where $k_\varepsilon\in\R$ is the Lagrange multiplier and
$V_{\varepsilon\Omega}(x)=a_0-|x|_\Lambda^2+\frac{\varepsilon^2\Omega^2}4|x|^2$.

Multiplying both sides of the Euler-Lagrange equation by
$\widetilde\eta_{\varepsilon,\Omega}$, integrating by parts and
using
$\displaystyle\int_{\R^2}\widetilde\eta_{\varepsilon,\Omega}^2\,dx=1$
yield that $a_0+k_\varepsilon\varepsilon^2>\mu_\varepsilon>0$, where
$\mu_\varepsilon$ is the first eigenvalue of the Schr\"odinger
operator
$$-\Delta+\frac1{\varepsilon^2}\left(|x|^2_{\widetilde\Lambda}-\frac{\varepsilon^2\Omega^2}{4}|x|^2\right)\quad{\rm in~}L^2(\R^2)\,.$$
Note that, by the assumption on $\Omega$ and $\Lambda$, the
potential of the operator is positive and goes to $\infty$ when
$|x|\to\infty$.

Define
$$\widetilde\varepsilon=\left(1-\frac{\varepsilon^2\Omega^2}4\right)^{-1/2}\frac{a_0}{a_0+k_\varepsilon\varepsilon^2}\,\varepsilon\,,\quad
\nu_\varepsilon(x)=\sqrt{\frac{a_0}{a_0+k_\varepsilon\varepsilon^2}}\,\widetilde\eta_{\varepsilon,\Omega}
\left(\sqrt{\frac{a_0+k_\varepsilon\varepsilon^2}{a_0}}\,x\right)\,.$$
The function $\nu_\varepsilon$ satisfies,
$$-\Delta\nu_\varepsilon=\frac1{\widetilde\varepsilon^2}\big(\widetilde
a-\nu_\varepsilon^2\big)\nu_\varepsilon\,,\quad
\nu_\varepsilon>0\quad {\rm in~ \R^2}\,,$$ where
$$\widetilde a(x)=\widetilde a_{\varepsilon\Omega}=\widetilde a_0-|x|_{\widetilde\Lambda}^2\,,\quad
\widetilde a_0=\frac{a_0}{1-\frac{\varepsilon^2\Omega^2}4}\,,\quad \widetilde\Lambda^2=\frac{\Lambda^2-\frac{\varepsilon^2\Omega^2}4}{1-\frac{\varepsilon^2\Omega^2}4}\,.$$
The conclusion (2) in Theorem~\ref{thm:IM0} asserts that,
$$\nu_\varepsilon(x)=\eta_{\widetilde\varepsilon,\widetilde
a}(x)\quad (x\in\R^2)\,,$$ where $\eta_{\widetilde\varepsilon,\widetilde a}$ is the solution of the unconstrained problem.
As a consequence, there holds,
$$\widetilde\eta_{\varepsilon,\Omega}(x)=\sqrt{\frac{a_0+k_\varepsilon\varepsilon^2}{a_0}}\,\eta_{\widetilde\varepsilon,\widetilde a}
\left(\sqrt{\frac{a_0}{a_0+k_\varepsilon\varepsilon^2}}\,x\right)\,.$$

Thanks to the conclusions (3)-(5) in Theorem~\ref{thm:IM0} and the
mass constraint
$\displaystyle\int_{\R^2}\widetilde\eta_{\varepsilon,\Omega}^2\,dx=1$,
there holds,
\begin{align*}
\left(\frac{a_0}{a_0+k_\varepsilon\varepsilon^2}\right)^2&=\left(\int_{\widetilde a(x)>0}\widetilde a(x)\,dx\right)\big(1+\mathcal O(\varepsilon^{1/3})\big)\\
&=\Lambda\left(\Lambda^2-\frac{\varepsilon^2\Omega^2}{4}\right)^{-1/2}\left(1-\frac{\varepsilon^2\Omega^2}{4}\right)^{-3/2}\,
\big(1+\mathcal O(\varepsilon^{1/3})\big)\,.
\end{align*}
In the sequel, let,
\begin{multline}\label{eq:pe}
\alpha_{\varepsilon\Omega}
=a_0\left(\frac{1-\frac{\varepsilon^2\Omega^2}{4\Lambda^2}}{1-\frac{\varepsilon^2\Omega^2}{4}}\right)^{1/4}\,,\quad
\widetilde\Lambda_{\varepsilon\Omega}=\Lambda\left(\frac{1-\frac{\varepsilon^2\Omega^2}{4\Lambda^2}}{1-\frac{\varepsilon^2\Omega^2}4}\right)^{1/2}\\
p_{\varepsilon\Omega}(x)=\Big(
\alpha_{\varepsilon\Omega}-\,|x|^2_{\widetilde
\Lambda_{\varepsilon\Omega}}\Big)=\sqrt{\frac{a_0+k_\varepsilon\varepsilon^2}{a_0}}\,\widetilde
a\left(\sqrt{\frac{a_0}{a_0+k_\varepsilon\varepsilon^2}}\,x\right)\big(1+\mathcal
O(\varepsilon^{1/3})\big)\,.
\end{multline}
 Now, an immediate application of
Theorem~\ref{thm:IM0} leads to:

\begin{thm}\label{thm:IM0'}
Let $M\in(0,2\Lambda)$.  There exist positive constants
$\varepsilon_0$, $C$ and $\delta_0$ such that, if
$\varepsilon\in(0,\varepsilon_0)$ and $\Omega\in[0,M)$, then there
is a real-valued minimizer $\widetilde\eta_{\varepsilon,\Omega}$ of
the constrained problem \eqref{eq:E'}   satisfying:
\begin{enumerate}
\item $E_\varepsilon(\widetilde\eta_{\varepsilon,\Omega})\leq
C\Omega^2$ and $\widetilde\eta_{\varepsilon,\Omega}>0$ in
$\R^2$\,;
\item $\widetilde\eta_{\varepsilon,\Omega}(x)\leq
C\varepsilon^{1/3}\exp\big(\delta_0
p_{\varepsilon\Omega}(x)/(\varepsilon^{2/3})\big)$ if
$p_{\varepsilon\Omega}(x)\leq -\delta_0\varepsilon^{1/3}$\,;
\item $(1-C\varepsilon^{1/3})\sqrt{p_{\varepsilon\Omega}(x)}\leq \widetilde\eta_{\varepsilon,\Omega}(x)\leq
\sqrt{p_{\varepsilon\Omega}(x)}$ if
$p_{\varepsilon\Omega}(x)\geq \delta_0\varepsilon^{1/3}$\,;
\item $\eta_\varepsilon(x)\leq C\varepsilon^{1/3}$ if $-\delta_0\varepsilon^{1/3}\leq p_{\varepsilon\Omega}(x)\leq \delta_0\varepsilon^{1/3}$\,.
\end{enumerate}
\end{thm}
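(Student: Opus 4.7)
The plan is to deduce Theorem~\ref{thm:IM0'} directly from Theorem~\ref{thm:IM0} via the change of variables already set up in the preamble to the statement. There it is shown that the positive minimizer $\widetilde\eta_{\varepsilon,\Omega}$ of the constrained problem equals the rescaling
$$\widetilde\eta_{\varepsilon,\Omega}(x)=\sqrt{\tfrac{a_0+k_\varepsilon\varepsilon^2}{a_0}}\,\eta_{\widetilde\varepsilon,\widetilde a}\Bigl(\sqrt{\tfrac{a_0}{a_0+k_\varepsilon\varepsilon^2}}\,x\Bigr),$$
where $\eta_{\widetilde\varepsilon,\widetilde a}$ is the unique positive solution furnished by Theorem~\ref{thm:IM0} for the modified potential $\widetilde a(x) = \widetilde a_0 - |x|_{\widetilde\Lambda}^2$. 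First I would verify that under the hypothesis $\varepsilon\Omega \leq M < 2\Lambda$ the quantities $\widetilde a_0$, $\widetilde\Lambda$, $(a_0+k_\varepsilon\varepsilon^2)/a_0$, and $\widetilde\varepsilon/\varepsilon$ all stay bounded between two positive constants independent of $\varepsilon,\Omega$. The cap $M<2\Lambda$ is essential here, since otherwise $\widetilde\Lambda$ degenerates to $0$. Thus Theorem~\ref{thm:IM0} is applicable with uniform constants.

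To obtain assertions (2)-(4), I would substitute $y = \sqrt{a_0/(a_0+k_\varepsilon\varepsilon^2)}\,x$ into the corresponding bounds of items (3)-(5) of Theorem~\ref{thm:IM0}. The final line of \eqref{eq:pe} identifies $p_{\varepsilon\Omega}(x)$ with $\sqrt{(a_0+k_\varepsilon\varepsilon^2)/a_0}\,\widetilde a(y)\,(1+\mathcal O(\varepsilon^{1/3}))$, so the three threshold regions for $|y|_{\widetilde\Lambda}$ in Theorem~\ref{thm:IM0} translate exactly to the three regions for $p_{\varepsilon\Omega}(x)$ in Theorem~\ref{thm:IM0'}, after choosing $\delta_0$ to absorb the bounded scaling factor. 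The prefactor $\sqrt{(a_0+k_\varepsilon\varepsilon^2)/a_0}$ multiplying $\eta_{\widetilde\varepsilon,\widetilde a}$, and the exponential rate $1/(4\widetilde\varepsilon^{2/3}) \approx 1/\varepsilon^{2/3}$, are both absorbed into new constants $C$ and $\delta_0$, yielding (2)-(4).

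For the energy bound in (1), I would appeal to the minimization property of $\widetilde\eta_{\varepsilon,\Omega}$ and test against a competitor. A natural choice is the unconstrained minimizer $\eta_\varepsilon$ associated with the original potential $a$, renormalized to have unit $L^2$-mass: the normalization of $a$ together with the pointwise bounds from Theorem~\ref{thm:IM0} gives $\|\eta_\varepsilon\|_{L^2(\R^2)}^2 = 1 + \mathcal O(\varepsilon^{1/3})$, so renormalization is at negligible cost. The $\Omega$-free part of $E_\varepsilon$ at this test function is $\mathcal O(|\ln\varepsilon|)$ by Theorem~\ref{thm:IM0}(1), while the centrifugal correction $-\tfrac{\Omega^2}{4}\int|x|^2\eta_\varepsilon^2\,dx$ is bounded in absolute value by $C\Omega^2$ because $\eta_\varepsilon$ is essentially supported in the bounded region $\mathcal D$. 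Combining gives $e_{\varepsilon,\Omega} \leq C|\ln\varepsilon| + C\Omega^2 \leq C\Omega^2$, using $|\ln\varepsilon| \ll \Omega^2$ in the regime of the main theorem.

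Rather than a single obstacle, the main care is verifying uniformity of constants as $\varepsilon\Omega$ varies over $[0,M]$; no new PDE analysis is required, since Theorem~\ref{thm:IM0} does the heavy lifting. The proof is essentially an exercise in tracking scalings, which is why the paper presents the result as an immediate application of Theorem~\ref{thm:IM0}.
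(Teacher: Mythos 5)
Your proposal is correct and takes essentially the same route as the paper, which likewise obtains Theorem~\ref{thm:IM0'} as an immediate application of Theorem~\ref{thm:IM0} via exactly the rescaling $\widetilde\eta_{\varepsilon,\Omega}(x)=\sqrt{(a_0+k_\varepsilon\varepsilon^2)/a_0}\;\eta_{\widetilde\varepsilon,\widetilde a}\big(\sqrt{a_0/(a_0+k_\varepsilon\varepsilon^2)}\,x\big)$, the identification of $p_{\varepsilon\Omega}$ in \eqref{eq:pe}, and the observation that $\widetilde a_0$, $\widetilde\Lambda$, $\widetilde\varepsilon/\varepsilon$ and the multiplier factor remain uniformly bounded when $\varepsilon\Omega\leq M<2\Lambda$ (the latter pinned down through the mass constraint). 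Your trial-state argument for the energy bound in (1) supplies a detail the paper leaves implicit (it is effectively imported from \cite{IM}, cf.\ \eqref{eq-en:ue}), and your reading that the bound $C\Omega^2$ is meant in the regime $\Omega\gg|\ln\varepsilon|$ of Theorem~\ref{thm:K} matches the paper's usage.
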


\subsection{A uniform bound of the ground states}

\begin{thm}\label{thm:IM'}
Let $M\in(0,2\Lambda)$.  There exist positive constants $C$,
$\delta$, $\lambda$ and $\varepsilon_0$ such that, if
$\varepsilon\in(0,\varepsilon_0)$ and $0<\Omega\leq M/\varepsilon$,
then every minimizer $v_\varepsilon$ of \eqref{eq-gse:G} satisfies:
$$|\widetilde\eta_\varepsilon v_\varepsilon(x)|\leq C\left(\sqrt{\frac1{2\Lambda-M}}\,+1\right)\quad{\rm
in}~\R^2\,.$$
\end{thm}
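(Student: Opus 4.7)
The plan is to apply Kato's inequality to the Euler-Lagrange equation for $u_\varepsilon=\widetilde\eta_{\varepsilon,\Omega}\,v_\varepsilon$, viewed as an $L^2$-normalized minimizer of $F_\varepsilon$, thereby reducing the pointwise bound to an estimate on the associated Lagrange multiplier. Expanding the magnetic term puts the equation in the convenient form
\[
-(\nabla-i\Omega\Ab_0)^2u_\varepsilon=\frac{1}{\varepsilon^2}\bigl(V_{\varepsilon\Omega}(x)-|u_\varepsilon|^2\bigr)u_\varepsilon+\lambda u_\varepsilon,\qquad V_{\varepsilon\Omega}(x)=a(x)+\frac{\varepsilon^2\Omega^2}{4}|x|^2,
\]
with $\lambda\in\R$ the Lagrange multiplier. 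The hypothesis $\varepsilon\Omega\le M<2\Lambda$ makes $a_0-V_{\varepsilon\Omega}=(1-\varepsilon^2\Omega^2/4)x_1^2+(\Lambda^2-\varepsilon^2\Omega^2/4)x_2^2$ positive definite, so $V_{\varepsilon\Omega}\le a_0$. Kato's inequality combined with the maximum principle at a point $x^*$ where $|u_\varepsilon|$ is maximal (existence is ensured by the exponential decay of $u_\varepsilon$) then yields
\[
\|u_\varepsilon\|_\infty^2\le V_{\varepsilon\Omega}(x^*)+\varepsilon^2\lambda\le a_0+\varepsilon^2\lambda.
\]

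To identify $\lambda$, I would test the Euler-Lagrange equation against $\bar u_\varepsilon$ and rearrange against $F_\varepsilon(u_\varepsilon)$, producing the Pohozaev-type identity
\[
\lambda=F_\varepsilon(u_\varepsilon)-\frac{1}{2\varepsilon^2}\int_{\R^2}a_+^2\,dx+\frac{1}{2\varepsilon^2}\int_{\R^2}|u_\varepsilon|^4\,dx.
\]
The same manipulation applied to the reduced problem of Section~\ref{sec:p} gives the analogous formula with $(F_\varepsilon(u_\varepsilon),\lambda,|u_\varepsilon|)$ replaced by $(E_\varepsilon(\widetilde\eta_{\varepsilon,\Omega}),k_\varepsilon,\widetilde\eta_{\varepsilon,\Omega})$. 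Subtracting the two identities and invoking the splitting \eqref{eq:FG} produces the clean cancellation
\[
\lambda=k_\varepsilon+\mathcal G_\varepsilon(v_\varepsilon)+\frac{1}{2\varepsilon^2}\int_{\R^2}\widetilde\eta_{\varepsilon,\Omega}^4\bigl(|v_\varepsilon|^4-1\bigr)\,dx,
\]
in which the $\varepsilon^{-2}$-divergent pieces have canceled by design.

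Each remaining quantity is now controlled. The explicit formula for $k_\varepsilon$ from Section~\ref{sec:p} gives $a_0+\varepsilon^2k_\varepsilon=O(1)$ uniformly in $\varepsilon$ and $\Omega$. The defect integral is handled by the $L^2$ normalization,
\[
\frac{1}{2\varepsilon^2}\int_{\R^2}\widetilde\eta_{\varepsilon,\Omega}^4|v_\varepsilon|^4\,dx=\frac{1}{2\varepsilon^2}\int_{\R^2}|u_\varepsilon|^4\,dx\le\frac{1}{2\varepsilon^2}\|u_\varepsilon\|_\infty^2,
\]
producing a term that can be absorbed. For $\mathcal G_\varepsilon$ I would use the admissible test function $v\equiv 1$ (admissible because $\int\widetilde\eta_{\varepsilon,\Omega}^2\,dx=1$) together with minimality to write $\mathcal G_\varepsilon(v_\varepsilon)\le\mathcal G_\varepsilon(1)=(\Omega^2/4)\int\widetilde\eta_{\varepsilon,\Omega}^2|x|^2\,dx$. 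Theorem~\ref{thm:IM0'} confines $\widetilde\eta_{\varepsilon,\Omega}^2$ to the ellipse $\mathcal D_{\varepsilon\Omega}$ (up to exponentially small tails), on which $|x|^2\le\alpha_{\varepsilon\Omega}/\widetilde\Lambda_{\varepsilon\Omega}^2$, and the formula \eqref{eq:pe} gives $\widetilde\Lambda_{\varepsilon\Omega}^2\ge\Lambda(2\Lambda-M)/2$, hence $\varepsilon^2\mathcal G_\varepsilon(v_\varepsilon)\le CM^2/(2\Lambda-M)$.

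Combining everything,
\[
\|u_\varepsilon\|_\infty^2\le (a_0+\varepsilon^2k_\varepsilon)+\varepsilon^2\mathcal G_\varepsilon(v_\varepsilon)+\tfrac{1}{2}\|u_\varepsilon\|_\infty^2\le C\Bigl(1+\frac{1}{2\Lambda-M}\Bigr)+\tfrac{1}{2}\|u_\varepsilon\|_\infty^2,
\]
so absorbing the last term and taking square roots yields the stated estimate. The main technical hurdle is the Pohozaev identity for $\lambda$: its $\varepsilon^{-2}$-divergent pieces cancel precisely because of the decomposition \eqref{eq:FG}, and this cancellation combined with the $L^2$-normalization is what produces the self-improving estimate that closes the argument. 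The singular factor $(2\Lambda-M)^{-1/2}$ enters only through the second moment of $\widetilde\eta_{\varepsilon,\Omega}$, reflecting the elongation of the bulk ellipse $\mathcal D_{\varepsilon\Omega}$ along the $x_2$-axis as $\varepsilon\Omega\to 2\Lambda$.
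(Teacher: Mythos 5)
Your argument is correct in substance, but it reaches the conclusion by a genuinely different route than the paper. Both proofs pivot on the same differential inequality $\tfrac12\Delta U_\varepsilon\geq-\varepsilon^{-2}\big(V_{\varepsilon\Omega}+\varepsilon^2\lambda-U_\varepsilon\big)U_\varepsilon$ for $U_\varepsilon=|u_\varepsilon|^2$ and on maximum-principle reasoning. The difference is in how the Lagrange multiplier is handled. The paper never computes it: it imports from Ignat--Millot the a priori bounds $F_\varepsilon(u_\varepsilon)\leq C\Omega^2$, $|\ell_\varepsilon|\leq C\varepsilon^{-1}\Omega$ (so that $\varepsilon^2\ell_\varepsilon=\mathcal O(\varepsilon\Omega)=\mathcal O(1)$) and $\int_{\R^2\setminus\mathcal D}|u_\varepsilon|^4\,dx\leq C\varepsilon^2\Omega^2$, and then runs a localized argument: subharmonicity of $U_\varepsilon$ in an exterior region, the mean-value inequality plus the mass bound to control $U_\varepsilon$ on $\{|x|\geq2\lambda\}$, and a contradiction at an interior maximum on a bounded set. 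You instead derive the exact identity $\lambda=F_\varepsilon(u_\varepsilon)+\frac1{2\varepsilon^2}\int|u_\varepsilon|^4\,dx-\frac1{2\varepsilon^2}\int a_+^2\,dx$ (which checks out: test the Euler--Lagrange equation against $\bar u_\varepsilon$ and expand $[a-|u|^2]^2-a_-^2=a_+^2-2a|u|^2+|u|^4$), subtract its analogue for $k_\varepsilon$, and let the decomposition \eqref{eq:FG} cancel the $\varepsilon^{-2}$-divergent pieces; the remainders are then controlled by the explicit formula for $a_0+k_\varepsilon\varepsilon^2$ from Section~\ref{sec:p}, a trial-state comparison, and absorption of the quartic term via the normalization. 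What your route buys: it is self-contained (no citation of the multiplier bound from \cite{IM}) and it makes the origin of the factor $(2\Lambda-M)^{-1/2}$ transparent through $\widetilde\Lambda_{\varepsilon\Omega}^2\geq\Lambda(2\Lambda-M)/2$ in the second moment of $\widetilde\eta_{\varepsilon,\Omega}$ (in the paper this dependence is buried in the choice of the constant $\lambda$). What the paper's route buys: it works on bounded regions with boundary control, so it never needs a global maximum point of $|u_\varepsilon|$.

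Two small patches are needed in your write-up. First, $v\equiv1$ is not admissible in \eqref{eq-gse:G} as literally stated, since $1\notin L^2(\R^2)$; but the comparison you want is available on the $u$-side: $u=\widetilde\eta_{\varepsilon,\Omega}$ is admissible for \eqref{eq-gse}, and $F_\varepsilon(u_\varepsilon)\leq F_\varepsilon(\widetilde\eta_{\varepsilon,\Omega})$ together with \eqref{eq:FG} yields $\mathcal G_\varepsilon(v_\varepsilon)\leq\mathcal G_\varepsilon(1)=\frac{\Omega^2}{4}\int_{\R^2}\widetilde\eta_{\varepsilon,\Omega}^2|x|^2\,dx$ as desired. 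Second, you cannot invoke ``exponential decay of $u_\varepsilon$'' to get a maximum point --- the paper proves such decay only for $\widetilde\eta_{\varepsilon,\Omega}$, not for $u_\varepsilon$ --- but the gap closes with precisely the paper's device: since $\varepsilon\Omega\leq M<2\Lambda$ makes the quadratic form $a_0-V_{\varepsilon\Omega}$ positive definite, $V_{\varepsilon\Omega}(x)+\varepsilon^2\lambda\to-\infty$ as $|x|\to\infty$, so $U_\varepsilon$ is subharmonic outside a fixed ball, and the mean-value inequality combined with $\|u_\varepsilon\|_{L^2}=1$ forces $U_\varepsilon(x)\to0$ at infinity; continuity (elliptic regularity) then guarantees the maximum is attained. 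With these two repairs your proof is complete and, if anything, more self-contained than the paper's.
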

\begin{proof}
Under the assumption on the rotational speed, Proposition~3.2 in
\cite{IM} implies that the problem \eqref{eq-gse} has a minimizer
$u_\varepsilon$. In light of the decomposition in \eqref{eq:FG}, it
follows that
$v_\varepsilon=u_\varepsilon/\widetilde\eta_\varepsilon$ is a
minimizer of the problem \eqref{eq-gse:G}. Theorem~\ref{thm:IM'}
will be proved by establishing properties of $u_\varepsilon$. The
function $u_\varepsilon$ satisfies
\begin{equation}\label{eq-ue}
-(\nabla-i\Omega\Ab_0) u_\varepsilon=\frac1{\varepsilon^2}\big(a(x)+\frac14\varepsilon^2\Omega^2|x|^2+\varepsilon^2\ell_\varepsilon-|u_\varepsilon|^2\big)u_\varepsilon\quad{\rm
in}~\R^2\,,\end{equation} where $\ell_\varepsilon\in\R$ is the lagrange multiplier. Furthermore, it holds (see the derivation of \cite[(3.7)\&(3.11)]{IM}):
\begin{equation}\label{eq-en:ue}
F_\varepsilon(u_\varepsilon)\leq C\Omega^2\,,\quad |\ell_\varepsilon|\leq C\varepsilon^{-1}\Omega\,,\quad
\int_{\R^2\setminus\mathcal D}|u_\varepsilon|^4\,dx\leq C\varepsilon^2\Omega^2\,.
\end{equation}
Let $U_\varepsilon=|u_\varepsilon|^2$ and
$b(x)=a(x)+\frac14\varepsilon^2\Omega^2|x|^2+\varepsilon^2\ell_\varepsilon$.
In light of the identity,
$${\rm
Re}\Big[\overline{u_\varepsilon}\,(\nabla-i\Omega\Ab_0)^2u_\varepsilon\Big]=\frac12\Delta
U_\varepsilon-|(\nabla-i\Omega\Ab_0)u_\varepsilon|^2\,,$$  the
function $U_\varepsilon$ satisfies,
\begin{equation}\label{eq:Ue}
\frac12\Delta U_\varepsilon\geq
-\frac1{\varepsilon^2}(b(x)-U_\varepsilon)U_\varepsilon\quad{\rm
in~}\R^2\,.
\end{equation}
Let $\lambda>\sqrt{a_0}$\,, $\mathbb E=\{x\in\R^2~:~|x|\geq
2\lambda\,\}$ and $\Theta=\{x\in\R^2~:~|x|> \lambda\}$. The
condition on $\lambda$ ensures that
$\Theta\subset\R^2\setminus\mathcal D$. In the set $\Theta$, there
holds,
$$b(x)\leq a_0-\lambda^2(\Lambda^2-M^2)+\varepsilon^2\ell_\varepsilon\leq -\lambda^2\left(\Lambda^2-\frac{M^2}4\right)+C\,.$$
As a consequence, it is possible to select the constant $\lambda\geq
\sqrt{\frac{2C}{\Lambda^2-\frac{M^2}2}}$ such that the function
$U_\varepsilon$ is subharmonic in the open set $\Theta$.

Consider an arbitrary point $x_0\in\mathbb E$. The definition of the
set $\Theta$ yields that $B(x_0,\lambda)\subset\Theta$ and
$\Theta\subset\R^2\setminus \mathcal D$. Since the function
$U_\varepsilon$ is subharmonic and its $L^2$-norm is estimated in
\eqref{eq-en:ue}, then there exists a constant $C_*>0$ such that,
$$0\leq U_\varepsilon(x_0)\leq
\frac1{|B(x_0,\lambda)|}\int_{B(x_0,\lambda)}U_\varepsilon^2(x)\,dx\leq
\mathcal O\left(\frac1\lambda\varepsilon\Omega\right)\leq \frac{C_*}{\lambda}\,.$$
The next step is to prove that $U_\varepsilon$ is bounded in the set
$$B_{r}=\{x\in\R^2\setminus\mathcal
D~:~|x|\leq r\}
$$
where $r=3\lambda$\,. Select a positive constant $C$ such that
$b(x)\leq C\lambda+\frac{C_*}{\lambda}$ in $B_{r}$.  Notice that
$\partial B_{r}\subset\mathbb E$ and consequently,
$U_\varepsilon\leq C_*\leq C\lambda+\frac{C_*}{\lambda}$ in
$\partial B_{r}$. Thus, if the maximum of $U_\varepsilon$ in $B_{r}$
is greater than $C\lambda+\frac{C_*}{\lambda}$, then  the point of
maximum  is an interior point in $B_{r}$. It is impossible that such
a point of maximum exists. In fact, if there exists a point of
maximum $x_0$ satisfying
$C\lambda+\frac{C_*}{\lambda}-U_\varepsilon(x_0)<0$, then $\Delta
U_\varepsilon(x_0)\leq 0$. This leads to a contradiction in light of
the following inequality,
$$\frac12\Delta
U_\varepsilon+\frac1{\varepsilon^2}\left(C\lambda+\frac{C_*}{\lambda}-U_\varepsilon\right)U_\varepsilon\geq
0\,,$$ which  results from \eqref{eq:Ue} and the choice of the
constant $C$.
\end{proof}

\begin{rem}\label{rem-ve}
There is a simple  consequence of Theorem~\ref{thm:IM'} and (3) in
Theorem~\ref{thm:IM0'}. Let $K$ be a compact set and $\delta>0$. If
$K\subset\{x\in\R^2~:~p_{\varepsilon\Omega}(x)>\delta\}$ for
sufficiently small values of $\varepsilon$, then there exist
constants $\varepsilon_{K,\delta}$ and $C_{K,\delta}$ such that, for
all $\varepsilon\in(0,\varepsilon_{K,\delta})$,
$|v_\varepsilon(x)|\leq C_{K,\delta}$ in $K$.

Here, the function $p_{\varepsilon\Omega}(x)$ is introduced in
\eqref{eq:pe}.
\end{rem}

\section{Reduced Ginzburg-Landau energy}\label{sec:gl}

Let $K=(-1/2,1/2)\times(-1/2,1/2)$ be a square of unit side length,
$\lambda$, $h_{\rm ex}$ and $\varepsilon$ be  positive parameters.
Consider the functional defined for all $u\in H^1(K;\C)$,
\begin{equation}\label{eq-2D-f}
E^{\rm 2D}_\lambda(u)=\int_K\left(|(\nabla-ih_{\rm ex}\Ab_0)u|^2+\frac\lambda{2\varepsilon^2}(1-|u|^2)^2\right)\,dx\,.
\end{equation}
Here $\Ab_0$ is the vector potential whose $\curl$ is equal to $1$,
\begin{equation}\label{eq-Ab0}
\Ab_0(x_1,x_2)=\frac12(-x_2,x_1)\,,\quad
(x_1,x_2)\in\R^2\,.\end{equation}
Notice that the functional $E^{\rm 2D}_\lambda$ is a simplified
version of the full Ginzburg-Landau functional considered in
\cite{SS00}, as the magnetic potential in \eqref{eq-2D-f} is given
and {\it not} an unknown of the problem.

Minimization of the functional $E^{\rm 2D}_\lambda$ arises naturally
over `magnetic periodic' functions. Let us introduce the following
space,
\begin{multline}\label{space-p}
E_{h_{\rm ex}}=\{u\in H^1_{\rm
loc}(\R^2;\C)~:~u(x_1+1,x_2)=e^{ih_{\rm
ex}x_2/2}u(x_1,x_2)\,,\\u(x_1,x_2+1)=e^{-ih_{\rm
ex}x_1/2}u(x_1,x_2)\}\,,
\end{multline}
together with the ground state energy,
\begin{equation}\label{eq-mp}
m_{\rm p}(h_{\rm ex},\varepsilon)=\inf\{E^{\rm 2D}_\lambda(u)~:~u\in E_{h_{\rm ex}}\}\,.
\end{equation}
Minimization of $E^{\rm 2D}_\lambda$ over configurations without
prescribed boundary conditions will be needed as well. The ground
state energy of this problem is,
\begin{equation}\label{eq-m0}
m_{0}(h_{\rm ex},\varepsilon)=\inf\{E^{\rm 2D}_\lambda(u)~:~u\in H^1(K)\}\,.
\end{equation}
 The ground state energies $m_0(h_{\rm ex},\varepsilon)$ and  $m_{\rm
p}(h_{\rm ex},\varepsilon)$ are estimated in \cite{K-3D} by
borrowing tools from \cite{SS} and \cite{SS00}. This is recalled in
the next theorem.

\begin{thm}\label{thm-SS-p}
Assume that $\lambda_2>\lambda_1>0$ are given constants,
$\lambda\in(\lambda_1,\lambda_2)$ and $h_{\rm ex}$ is a function of
$\varepsilon$ such that
$$|\ln\varepsilon|\ll h_{\rm ex}\ll \frac1{\varepsilon^2}\,,\quad{\rm as}~\varepsilon\to0\,.$$
As $\varepsilon\to 0$, the ground state energies $m_{0}(h_{\rm
ex},\varepsilon)$ and $m_{\rm p}(h_{\rm ex},\varepsilon)$ satisfy,
$$m_{0}(h_{\rm ex},\varepsilon)=h_{\rm ex}\ln\frac1{\varepsilon\sqrt{h_{\rm ex}}}\big(1+o(1)\big)\quad{\rm and}\quad
m_{\rm p}(h_{\rm ex},\varepsilon)=h_{\rm ex}\ln\frac1{\varepsilon\sqrt{h_{\rm
ex}}}\big(1+o(1)\big)\,.
$$
Here, the expression $o(1)$ tends to $0$ as $\varepsilon\to0$
uniformly with respect to $\lambda$.
\end{thm}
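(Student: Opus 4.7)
The plan is to establish matching upper and lower asymptotic bounds of the form $h_{\rm ex}\ln(1/(\varepsilon\sqrt{h_{\rm ex}}))$ for both $m_0$ and $m_{\rm p}$, following the Sandier--Serfaty strategy. Heuristically, in the regime $|\ln\varepsilon|\ll h_{\rm ex}\ll \varepsilon^{-2}$ the optimal configurations are quasi-uniform lattices of degree-one vortices with surface density $h_{\rm ex}$, so that each vortex has effective cell size $\ell\approx 1/\sqrt{h_{\rm ex}}$ and kinetic cost $\approx \pi\ln(\ell/\varepsilon)$, and summing over the $\approx h_{\rm ex}$ vortices in the unit cell $K$ yields the claimed total. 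The two ground-state energies $m_0$ and $m_{\rm p}$ are treated in parallel with the same constructions and bounds, the only difference being how one handles the boundary of $K$.

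For the upper bound, I would partition $K$ into sub-squares $K_j$ of side length $\ell\approx 1/\sqrt{h_{\rm ex}}$ and use the test function $u(x)=\rho_\varepsilon(x-a_j)\,e^{i\varphi(x)}$ on each $K_j$, where $a_j$ is the centre of $K_j$, $\rho_\varepsilon(y)=\min(|y|/\varepsilon,1)$ is the standard one-vortex profile, and the phase $\varphi$ is chosen so that $\nabla\varphi - h_{\rm ex}\Ab_0 = \nabla^\perp\xi$ with $\xi$ solving $-\Delta\xi = 2\pi\sum_j \delta_{a_j}-h_{\rm ex}$ on $K$, with boundary conditions adapted to either the magnetic-periodic space $E_{h_{\rm ex}}$ (for $m_{\rm p}$) or to the unconstrained $H^1(K)$ case (for $m_0$). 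A direct evaluation of $\int_K |\nabla\rho_\varepsilon|^2 + \rho_\varepsilon^2|\nabla\varphi-h_{\rm ex}\Ab_0|^2 + \frac{\lambda}{2\varepsilon^2}(1-\rho_\varepsilon^2)^2$, separating the core contribution $\sim\pi\ln(\ell/\varepsilon)$ per vortex from the smooth London-type remainder, gives the upper bound $h_{\rm ex}\ln(1/(\varepsilon\sqrt{h_{\rm ex}}))(1+o(1))$ uniformly in $\lambda\in[\lambda_1,\lambda_2]$.

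For the lower bound, I would apply the Jerrard--Sandier vortex-ball construction to any configuration $u$ with $E^{\rm 2D}_\lambda(u)\leq Ch_{\rm ex}\ln(1/\varepsilon)$, extracting a finite family of disjoint balls $\{B(a_i,r_i)\}$ that cover $\{|u|\leq 1/2\}$, with $\sum r_i$ arbitrarily small and integer degrees $d_i$. This yields the localized kinetic bound
\[
\int_{\cup_i B(a_i,r_i)} |(\nabla-ih_{\rm ex}\Ab_0)u|^2 \;\geq\; 2\pi\sum_i |d_i|\,\ln\!\left(\tfrac{r_i}{\varepsilon|d_i|}\right)(1-o(1)),
\]
together with the Jacobian estimate $\curl(iu,\nabla u)\approx 2\pi\sum_i d_i\delta_{a_i}$ in $(C^{0,\alpha})^*$. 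Expanding $E^{\rm 2D}_\lambda$ and integrating the cross term $-2h_{\rm ex}\Ab_0\cdot(iu,\nabla u)$ by parts (using $\curl\Ab_0=1$) converts the magnetic part into $-2\pi h_{\rm ex}\sum_i d_i\,\Phi(a_i) + h_{\rm ex}^2\int_K|\Ab_0|^2$, for a suitable potential $\Phi$. Minimizing over the positions, degrees and radii then forces $\sum_i d_i\geq h_{\rm ex}(1-o(1))$ and an optimal radius $r_i\sim 1/\sqrt{h_{\rm ex}}$, reproducing the upper bound value. For $m_{\rm p}$ the boundary terms arising in the integration by parts vanish by magnetic-periodicity; for $m_0$ they are absorbed by a cutoff/localization argument near $\partial K$.

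The main obstacle is the sharpness of the lower bound, specifically the choice of ball radii $r_i$ that produces the factor $\ln(1/(\varepsilon\sqrt{h_{\rm ex}}))$ rather than only $\ln(1/\varepsilon)$: one must iterate the ball construction at a scale comparable to the inter-vortex spacing $1/\sqrt{h_{\rm ex}}$ and then match the remainder to a mean-field London sub-problem, namely a Poisson problem whose source is the vortex measure $\sum d_i\delta_{a_i}$ balanced against the uniform background $-h_{\rm ex}$. Taking the radii too small forfeits the $\sqrt{h_{\rm ex}}$ factor, while taking them too large causes overlap; this trade-off, together with the passage from disjoint-ball lower bounds to the global energy estimate, is exactly what the machinery of \cite{SS,SS00} is designed to handle, and the uniformity in $\lambda\in[\lambda_1,\lambda_2]$ follows because $\lambda$ enters only through the potential term, which is negligible outside the vortex cores.
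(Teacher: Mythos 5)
The paper offers no proof of this theorem: it is recalled from \cite{K-3D}, which establishes it by borrowing the machinery of \cite{SS} and \cite{SS00}, so the relevant comparison is with that cited argument. Your outline reproduces exactly that standard Sandier--Serfaty route --- the upper bound via a periodic lattice of degree-one vortices at spacing $\approx 1/\sqrt{h_{\rm ex}}$ (this is precisely the construction the paper itself reproduces as Lemma~\ref{lem:test-fuc}, following \cite{AyS}), and the lower bound via the vortex-ball construction and Jacobian estimate, with the balls grown only to the inter-vortex scale $1/\sqrt{h_{\rm ex}}$ so as to capture $\ln\frac1{\varepsilon\sqrt{h_{\rm ex}}}$ rather than $\ln\frac1\varepsilon$ --- so in substance you are proposing the same proof the paper delegates to the literature, and you correctly isolate the genuine crux (the choice of ball radii and the matching with the mean-field problem). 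Two minor cautions: your heuristic constants mix normalizations (with the paper's $\int|\nabla u|^2$, lacking the factor $\frac12$, a degree-one vortex costs $2\pi\ln(\ell/\varepsilon)$ while the vortex density is $h_{\rm ex}/(2\pi)$, so your ``$\pi\ln$ per vortex times $h_{\rm ex}$ vortices'' only balances after correcting both), and the sharp lower bound is not obtained by freely ``minimizing over positions, degrees and radii'' but by the cell decomposition and blow-up of \cite{SS00}; both points are repaired by the very machinery you invoke.
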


In the forthcoming section, it will be needed a trial state
satisfying the mass constraint ($L^2$-norm equal to $1$) and
having an energy close to $m_{\rm p}(h_{\rm ex},\varepsilon)$. The
next Lemma provides one with  a useful trial state whose
$L^2$-norm is {\it close} to $1$.

\begin{lem}\label{lem:test-fuc}
Suppose that $\lambda>0$, $h_{\rm ex}$ and $\varepsilon$ are as in
Theorem~\ref{thm-SS-p}. There exists a function $f_\varepsilon$ in
$H^1(K)$ such that
$$|f_\varepsilon|\leq 1 \quad{\rm in}~K\,,$$
$$\{x\in K~:~|f_\varepsilon(x)|<1\}\subset \bigcup_{i=1}^{\mathsf n} B(a_i,\varepsilon)\quad {\rm and}\quad \mathsf n=\mathcal O(h_{\rm ex})\,,$$
$$1-\mathcal O(\varepsilon^2h_{\rm ex})\leq \int_{K}|f_\varepsilon(x)|^2\,dx\leq 1\,,$$
and
$$E^{\rm 2D}_\lambda(f_\varepsilon)\leq h_{\rm ex}\ln\frac1{\varepsilon\sqrt{h_{\rm
ex}}}\big(1+o(1)\big)\,,$$ as $\varepsilon\to0_+$. Furthermore,
$f_\varepsilon$ is independent of $\lambda$, and $\mathcal O$ is
uniform with respect to $\lambda$.
\end{lem}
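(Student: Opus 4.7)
The plan is to construct $f_\varepsilon$ explicitly as a vortex configuration with $\mathsf n\sim h_{\rm ex}$ vortices placed on a regular lattice in $K$, following the Sandier--Serfaty upper-bound construction that underlies Theorem~\ref{thm-SS-p} and is carried out in \cite{SS00,K-3D}. The $\lambda$-independence clause will be handled by the observation that $\lambda$ enters only through the nonlinear potential $\frac{\lambda}{2\varepsilon^2}(1-|f_\varepsilon|^2)^2$; a $\lambda$-free vortex profile of thickness $\varepsilon$ makes the potential term strictly subleading, so $\lambda$ appears only as a bounded prefactor in the error.

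Concretely, I would partition $K$ into $\mathsf n$ cells $Q_i$ of side length $\ell=\sqrt{2\pi/h_{\rm ex}}$, chosen so that the flux $h_{\rm ex}\ell^2=2\pi$ of $h_{\rm ex}\Ab_0$ through each cell matches the winding of one degree-one vortex, place a vortex at the center $a_i$ of each $Q_i$, and set
\[
f_\varepsilon(x)=\rho\!\left(\frac{|x-a_i|}{\varepsilon}\right) e^{i\varphi(x)}\qquad (x\in Q_i),
\]
where $\rho:[0,\infty)\to[0,1]$ is a fixed smooth profile with $\rho(0)=0$ and $\rho(r)=1$ for $r\ge 1$, and $\varphi$ is a real phase constructed cell-by-cell so that $\nabla\varphi-h_{\rm ex}\Ab_0$ is close to the divergence-free vector field generated by a unit vortex at $a_i$. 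By construction $|f_\varepsilon|\le 1$ everywhere, $\{|f_\varepsilon|<1\}\subset\bigcup_i B(a_i,\varepsilon)$, and the number of vortices is $\mathsf n=|K|/\ell^2=\mathcal O(h_{\rm ex})$; the mass constraint follows from
\[
\int_K|f_\varepsilon|^2\,dx=1-\sum_{i=1}^{\mathsf n}\int_{B(a_i,\varepsilon)}(1-\rho^2)\,dx=1-\mathcal O(\mathsf n\varepsilon^2)=1-\mathcal O(\varepsilon^2h_{\rm ex}),
\]
and $f_\varepsilon$ manifestly involves neither $\lambda$ nor the potential part of the functional.

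For the energy estimate, the kinetic contribution evaluates cell-by-cell to a magnetic single-vortex energy whose sum, by the computation behind Theorem~\ref{thm-SS-p}, equals $h_{\rm ex}\ln\frac{1}{\varepsilon\sqrt{h_{\rm ex}}}(1+o(1))$. The potential part is bounded by
\[
\int_K \frac{\lambda}{2\varepsilon^2}(1-|f_\varepsilon|^2)^2\,dx=\sum_{i=1}^{\mathsf n}\int_{B(a_i,\varepsilon)}\frac{\lambda}{2\varepsilon^2}(1-\rho^2)^2\,dx=\mathcal O(\lambda_2\mathsf n)=\mathcal O(h_{\rm ex}),
\]
which is $o\!\bigl(h_{\rm ex}\ln\frac{1}{\varepsilon\sqrt{h_{\rm ex}}}\bigr)$ because the assumption $|\ln\varepsilon|\ll h_{\rm ex}\ll\varepsilon^{-2}$ forces $\ln\frac{1}{\varepsilon\sqrt{h_{\rm ex}}}=|\ln\varepsilon|-\tfrac12\ln h_{\rm ex}\to\infty$. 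Since $\lambda$ appears only through the bounded constant $\lambda_2$ in a strictly subleading term, the $o(1)$-correction is uniform in $\lambda\in(\lambda_1,\lambda_2)$.

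The main technical obstacle is the construction of the phase $\varphi$: single-valuedness of $\varphi$ on $Q_i\setminus\{a_i\}$ forces the flux quantization $h_{\rm ex}\ell^2\in 2\pi\Z$, and matching boundary traces between adjacent cells then requires a gauge correction obtained by solving a Neumann problem on each cell. The non-integrality of $h_{\rm ex}$ and the fact that $1/\ell^2$ is generally not an integer are dealt with by perturbing $\ell$ slightly (so that $K$ is tiled up to a boundary strip of width $\mathcal O(\ell)$), which changes $\mathsf n$ and the energy only by $\mathcal O(\sqrt{h_{\rm ex}})$, absorbed into the $o(1)$-error. This is precisely the technical heart of the constructions in \cite{SS,SS00,K-3D}, and the present lemma amounts to importing that construction and reading off the quantitative bullet points.
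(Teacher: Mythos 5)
Your construction is essentially the paper's own proof: the paper likewise tiles $K$ by a lattice of $N^2$ cells carrying one flux quantum each (taking $N=\lfloor\sqrt{h_{\rm ex}/2\pi}\rfloor$ so that $K$ is tiled exactly, rather than perturbing $\ell$ and discarding a boundary strip), builds the phase from the Neumann problem $-\Delta h+h_{\rm ex}=2\pi\delta_{a_0}$ in the fundamental cell with $\nabla\phi=-\nabla^\bot h+h_{\rm ex}\Ab_0$, uses the profile $\rho=\min(1,|x-a_0|/\varepsilon)$, and gets the energy from the per-cell bound $\int_{K_0\setminus B(a_0,\varepsilon)}|\nabla h|^2\le 2\pi\ln\frac{1}{\varepsilon\sqrt{h_{\rm ex}}}+\mathcal O(1)$ together with magnetic periodicity, citing \cite{AyS,Ay} for the same gauge-matching details you flag. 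Your mass estimate, vortex count, and the observation that $\lambda$ enters only through the subleading potential term (making the $o(1)$ uniform in $\lambda\in(\lambda_1,\lambda_2)$ since $\varepsilon^2 h_{\rm ex}\to 0$ forces $\ln\frac{1}{\varepsilon\sqrt{h_{\rm ex}}}\to\infty$) all match the paper's argument.
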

\begin{proof}
For the convenience of the reader, the construction of
$f_\varepsilon$ is outlined.  Details can be found in \cite{AyS}.
Let $N$ be the largest positive integer satisfying $N\leq
\sqrt{h_{\rm ex}/2\pi}<N+1$. Divide the square $K$ into $N^2$
disjoint squares $(K_j)_{0\leq j\leq N^2-1}$ each of side length
equal to $1/N$ and center $a_j$.  Let $h$ be the unique solution
of the problem,
$$\left\{
\begin{array}{lll}
-\Delta h+h_{\rm ex}=2\pi\delta_{a_0}&{\rm in}&K_0\\
\displaystyle\frac{\partial h}{\partial\nu}=0&{\rm on}&\partial K_0\\
\displaystyle\int_{K_0}h\,dx=0.&&
\end{array}\right.
$$
Here $\nu$ is the unit outward normal vector of $K_0$. The function
$h$  satisfies periodic conditions on the boundary of $K_0$, and
\begin{align*}
\int_{K_0\setminus B(a_0,\varepsilon)}|\nabla h|^2\,dx\leq 2\pi \ln\frac1{\varepsilon N}+\mathcal O(1)= 2\pi\ln\frac1{\varepsilon\sqrt{h_{\rm ex}}}+\mathcal O(1)
\,,\quad{\rm as}~\varepsilon\to0_+\,.
\end{align*}
The function $h$ is extended  by periodicity in the square $K$.
Let $\phi$ be a function (defined modulo $2\pi$) satisfying in
$K\setminus\{a_{j}~:~0\leq j\leq N^2-1\}$,
$$\nabla\phi=-\nabla^\bot h+h_{\rm ex}\Ab_0\,,\quad \big(\nabla^\bot=(-\partial_{x_2},\partial_{x_1})\big).$$
Here  $\Ab_0$ is the magnetic potential in \eqref{eq-Ab0}. If $x\in
K_0$, let $\rho(x)=\min(1,|x-a_0|/\varepsilon)$. The function $\rho$
is extended  by periodicity in the square $K$. Put
$f_\varepsilon(x)=\rho(x) e^{i\phi(x)}$ for all $x\in K$. The
function $f_\varepsilon$ can be extended as  a function in the space
$E_{h_{\rm ex}}$ in \eqref{space-p}, see \cite[Lemma~5.11]{Ay} for
details.

 The energy of $f_\varepsilon$ is easily computed, since
$f_\varepsilon$ is `magnetic periodic' and $N=\sqrt{h_{\rm
ex}/2\pi}\big(1+o(1)\big)$.  Clearly, in the square $K_0$,
$|f_\varepsilon(x)|<1$ if and only if $|x-a_0|<\varepsilon$. Thus,
it is easy to check that $f_\varepsilon$ satisfies the requirements
in Lemma~\ref{lem:test-fuc}.
\end{proof}

\section{Upper Bound}\label{sec:ub}
\subsection{The test configuration}
Recall the definition of the ground state energy
$C_0(\varepsilon,\Omega)$ in \eqref{eq-gse:G}. The assumption on the
rotational speed $\Omega$ is $|\ln\varepsilon|\ll\Omega\leq
M/\varepsilon$ with $M\in(0,2\Lambda)$. Let
$$L>\sqrt{a_0\left(1-\frac{M^2}{4}\right)^{-1/4}}\quad {\rm and}\quad 0<\delta<\min\left(\sqrt{ a_0\left(1-\frac{M^2}{4\Lambda^2}\right)}\,,\,\frac{L}{2}\right)\,.$$
Recall the definition of $\alpha_{\varepsilon\Omega}$ in
\eqref{eq:pe}. The constants $\delta$ and $L$ are selected so that
$$\delta<\sqrt{\alpha_{\varepsilon\Omega}}\,<L\quad{\rm and}\quad
\sqrt{\alpha_{\varepsilon\Omega}}\,+\delta<L\,.$$
Define,
$$\mathcal U_L=\{x\in\mathcal D~:~|x|_{\widetilde\Lambda_{\varepsilon\Omega}}<L\}\,.$$
Thanks to the assumption on $\Omega$, if $\varepsilon$ is sufficiently small, then there holds the inclusion,
$$  \mathcal D_{\varepsilon\Omega}=\{x\in\R^2~:~p_{\varepsilon\Omega}(x)>0\}\subset \mathcal U_L\,,$$
where $\widetilde\Lambda_{\varepsilon\Omega}$ and
$p_{\varepsilon\Omega}$ are introduced in \eqref{eq:pe} and
$\displaystyle\int_{p_{\varepsilon\Omega}(x)>0}p_{\varepsilon\Omega}(x)\,dx=1$.

Define
\begin{equation}\label{eq:lh}
\ell=\left(\frac\Omega{|\ln\varepsilon|}\right)^{1/4}\frac1{\sqrt{\Omega}}\,,\quad h_{\rm ex}=\frac1{\ell^2}\,.\end{equation}
Recall the ground state energy $m_{\rm p}(h_{\rm ex},\varepsilon)$
and the space $E_{h_{\rm ex}}$
 introduced in  \eqref{eq-mp} and \eqref{space-p} respectively. Let $f_\varepsilon\in E_{h_{\rm ex}}$ be the test function defined in Lemma~\ref{lem:test-fuc}.
 In particular, $f_\varepsilon$ satisfies $E^{\rm 2D}_\lambda(f_\varepsilon)\leq  h_{\rm ex}\ln\frac1{\varepsilon\sqrt{h_{\rm
ex}}}\big(1+o(1)\big)$ for any $\lambda$ varying between two
positive constants $\lambda_1$ and $\lambda_2$.

Define,
$$v(x)=\chi(x)\,f_\varepsilon\Big(\ell\sqrt{\Omega}\,x\Big)\quad(x\in\R^2)\,,$$
where $\chi$ is a cut-off function satisfying,
$$0\leq \chi\leq 1{\rm ~in~}\R^2\,,\quad \chi(x)=0{\rm ~when~}|x|_{\widetilde\Lambda_{\varepsilon\Omega}}\geq 2L\,,\quad \chi(x)=1~{\rm when~}
|x|_{\widetilde\Lambda_{\varepsilon\Omega}}\leq L\,,$$ and
$$|\nabla\chi|\leq \frac{C}L\quad{\rm in~}\R^2\,.$$

 Let $(\mathcal K_j)$ be the lattice of $\R^2$ generated by the cube,
$$\mathcal K=\bigg(-\frac1{2\ell\sqrt{\Omega}},\frac1{2\ell\sqrt{\Omega}}\bigg)\times \bigg(-\frac1{2\ell\sqrt{\Omega}},\frac1{2\ell\sqrt{\Omega}}\bigg)\,.$$
Let $\mathcal J=\{\mathcal K_j~:~\mathcal K_j\cap\mathcal
U_{2L}\not=\emptyset\}$ and $N={\rm Card}\,\mathcal J$. As
$\varepsilon\to0_+$, the number $N$ satisfies,
$$N=|\mathcal U_{2L}|\times \big(\ell\sqrt{\Omega}\big)^2\,\big(1+o(1)\big)\,.$$
In light of Lemma~\ref{lem:test-fuc} and the exponential decay of
$\widetilde\eta_{\varepsilon,\Omega}$ in Lemma~\ref{thm:IM0},  the
function $v$ satisfies,
\begin{equation}\label{eq-nv}
1-\mathcal
O(\varepsilon^2\Omega)\leq\int_{\R^2}\widetilde\eta_{\varepsilon,\Omega}^2|v|^2\,dx\leq
1\,.\end{equation} Define the test function,
\begin{equation}\label{eq-testfunc}\widetilde
v(x)=\frac{v(x)}{\sqrt{\displaystyle\int_{\R^2}\widetilde\eta_\varepsilon^2|v|^2\,dx}}\,.\end{equation}
Clearly, the function $\widetilde v$ satisfies the weighted mass
constraint,
\begin{equation}\label{eq-wmc}
\int_{\R^2}\widetilde\eta_\varepsilon^2|\widetilde
v|^2\,dx=1\,,\end{equation} and consequently, there holds the upper
bound $C_0(\varepsilon, \Omega)\leq \mathcal
G_\varepsilon(\widetilde v)$.  The rest of the section will be
devoted to estimating the energy $\mathcal G_\varepsilon(\widetilde
v)$. It will be established that:
\begin{equation}\label{eq-ub*}
\limsup_{\varepsilon\to
0_+}\left(\frac{\mathcal G_\varepsilon(\widetilde v)}{2\Omega\left[\ln\frac1{\varepsilon\sqrt{\Omega}}\right]}-1\right)\leq
0\,.\end{equation} The next estimate \eqref{eq-ub} is a consequence of \eqref{eq-ub*},
\begin{equation}\label{eq-ub}
C_0(\varepsilon,\Omega)\leq  \Omega\left[\ln\frac1{\varepsilon\sqrt{\Omega}}\right]\big(1+{\rm err}(\varepsilon)\big)\,.
\end{equation}

\subsection{Energy of the test configuration: Proof of \eqref{eq-ub*}}
It will be shown that the term
$$C_\varepsilon=\mathcal G_{\varepsilon}(\widetilde v)=\displaystyle\int_{\R^2}\left(\widetilde\eta_\varepsilon^2|(\nabla-i\Omega\Ab_0)\widetilde
v|^2+\frac{\widetilde\eta_\varepsilon^4}{2\varepsilon^2}(1-|\widetilde
v|^2)^2\right)\,dx$$ is of leading order equal to
$L_\varepsilon=\Omega\left[\ln\displaystyle\frac1{\varepsilon\sqrt{\Omega}}\right]\,$.
It is useful to write $C_\varepsilon$ as the sum of four terms,
\begin{equation}\label{eq-C-ep}
C_\varepsilon=C_{\varepsilon,1}+C_{\varepsilon,2}+C_{\varepsilon,3}+C_{\varepsilon,4}\,,
\end{equation}
where
\begin{align}
&C_{\varepsilon,1}=\int_{|x|_{\widetilde\Lambda_{\varepsilon\Omega}}\leq \sqrt{\alpha_{\varepsilon\Omega}}\,-\delta}
\left(\widetilde\eta_\varepsilon^2|(\nabla-i\Omega\Ab_0)\widetilde
v|^2+\frac{\widetilde\eta_\varepsilon^4}{2\varepsilon^2}(1-|\widetilde
v|^2)^2\right)\,dx\,,\label{eq-C-ep1}\\
&C_{\varepsilon,2}=\int_{\sqrt{\alpha_{\varepsilon\Omega}}\,-\delta\leq |x|_{\widetilde\Lambda_{\varepsilon\Omega}}\leq \sqrt{\alpha_{\varepsilon\Omega}}\,+\delta}\left(\widetilde\eta_\varepsilon^2|(\nabla-i\Omega\Ab_0)\widetilde
v|^2+\frac{\widetilde\eta_\varepsilon^4}{2\varepsilon^2}(1-|\widetilde
v|^2)^2\right)\,dx\,,\label{eq-C-ep2}\\
&C_{\varepsilon,3}=\int_{\sqrt{\alpha_{\varepsilon\Omega}}\,+\delta\leq|x|_{\widetilde\Lambda_{\varepsilon\Omega}}\leq 2L}\left(\widetilde\eta_\varepsilon^2|(\nabla-i\Omega\Ab_0)\widetilde
v|^2+\frac{\widetilde\eta_\varepsilon^4}{2\varepsilon^2}(1-|\widetilde
v|^2)^2\right)\,dx\,,\label{eq-C-ep3}\\
&C_{\varepsilon,4}=\int_{|x|_{\widetilde\Lambda_{\varepsilon\Omega}}\geq 2L}\left(\widetilde\eta_\varepsilon^2|(\nabla-i\Omega\Ab_0)\widetilde
v|^2+\frac{\widetilde\eta_\varepsilon^4}{2\varepsilon^2}(1-|\widetilde
v|^2)^2\right)\,dx\,,\label{eq-C-ep4}
%
\end{align}
and $\alpha_{\varepsilon\Omega}$ is as in \eqref{eq:pe}.

\subsubsection*{The term $C_{\varepsilon,1}$:}
Let $\mathcal J_0=\{j\in\mathcal J~:~\mathcal K_j\cap
\{x~:~|x|_{\widetilde\Lambda_{\varepsilon\Omega}}\leq
\sqrt{\alpha_{\varepsilon\Omega}}\,-\delta\}\not=\emptyset\}$. Since
$\delta$ is selected independently of $\varepsilon$, then in light
of Theorem~\ref{thm:IM0'}, there holds in every square $\mathcal
K_j$ with $j\in\mathcal J_0$,
$$\widetilde \eta_\varepsilon^2(x)\leq
p_{\varepsilon\Omega}(x)\,.
$$
The mean value theorem applied to the function
$p_{\varepsilon\Omega}$ yields,
$$
p_{\varepsilon\Omega}(x)\leq
p_{\varepsilon\Omega}(x_j)+\frac{C}{\ell\sqrt{\Omega}}\,,$$ where
$x_j$ is an arbitrary point  in $\mathcal K_j$ and $j\in\mathcal
J_0$. The above two estimates applied successively yield an upper
bound of the term $C_{\varepsilon,1}$ as follows:
$$C_{\varepsilon,1}\leq\sum_{j\in\mathcal J_0}
\Big[p_{\varepsilon\Omega}(x_j)+\frac{C}{\ell\sqrt{\Omega}}\,\Big]\int_{\mathcal
K_j}\left(|(\nabla-i\Omega\Ab_0)\widetilde
v|^2+\frac{\lambda_\varepsilon}{2\varepsilon^2}(1-|\widetilde
v|^2)^2\right)\,dx\,,$$ where
$$\lambda_\varepsilon=\max_{j\in\mathcal J_0}\left(\frac{p_{\varepsilon\Omega}(x_j)}{p_{\varepsilon\Omega}(x_j)+\frac{C}{\ell\sqrt{\Omega}}}\right)\,.$$

In the domain $\mathcal U_L$, the function $\chi$ is equal to $1$
and $v(x)=f_\varepsilon(\ell\sqrt{\Omega}\,x)$. By using
successively the estimate in \eqref{eq-nv}, the `magnetic'
periodicity of $v$ over the lattice $(\mathcal K_j)_j$ and the bound
$|v|\leq 1$, one gets the following upper bound,
\begin{align}
\int_{\mathcal
K_j}\Big(|(\nabla-i\Omega\Ab_0)&\widetilde
v|^2+\frac{\lambda_\varepsilon}{2\varepsilon^2}(1-|\widetilde
v|^2)^2\Big)\,dx\nonumber\\
&\leq(1+C\varepsilon^2\Omega)\int_{\mathcal
K_j}\left(|(\nabla-i\Omega\Ab_0)
v|^2+\frac{\lambda_\varepsilon}{2\varepsilon^2}(1-|
v|^2)^2\right)\,dx+C\Omega\int_{\mathcal K_j}|v|^4\,dx\nonumber\\
&\leq (1+C\varepsilon^2\Omega)\int_{\mathcal
K}\left(|(\nabla-i\Omega\Ab_0)
v|^2+\frac{\lambda_\varepsilon}{2\varepsilon^2}(1-|
v|^2)^2\right)\,dx+C\Omega|\mathcal K_j|\,.\label{eq-v}
\end{align}
The integral term in \eqref{eq-v} is computed by the   change of
variable $y=\ell\sqrt{\Omega}\,x$ that transforms it to
\begin{equation}\label{eq-v*}
\int_{K}\left(|(\nabla-ih_{\rm ex}\Ab_0)f_\varepsilon|^2+\frac{\lambda_\varepsilon}{2\tilde\varepsilon^2}(1-|f_\varepsilon|^2)^2\right)\,dx\,,
\end{equation}
where $\tilde\varepsilon=\varepsilon\ell\sqrt{\Omega}$ and $ h_{\rm
ex}=\frac1{\ell^2}\,$. As $\varepsilon\to0_+$,
$\tilde\varepsilon\gg\varepsilon$ and $h_{\rm ex}$ satisfies
$|\ln\varepsilon|\ll h_{\rm ex}\ll{\varepsilon}^{-2}$. Also,
$\lambda_\varepsilon$ remains inside a fixed interval
$[\lambda_1,\lambda_2]$. Consequently, it is possible to use
Lemma~\ref{lem:test-fuc} and get that $\big(1+o(1)\big)h_{\rm
ex}\ln\displaystyle\frac1{\varepsilon\sqrt{h_{ex}}}$ is an upper
bound of the  term in \eqref{eq-v*}. As a consequence, it is
obtained the following upper bound of $C_{\varepsilon,1}$,
\begin{equation}\label{eq-C-ep-1}
C_{\varepsilon,1}\leq (1+C\varepsilon^2\Omega) \sum_{j\in\mathcal
J_0}
\Big[p_{\varepsilon\Omega}(x_j)\,+C\varepsilon^2|\ln\varepsilon|+\frac{C}{\ell\sqrt{\Omega}}\,\Big]
\left(\big(1+o(1)\big)h_{\rm ex}\ln\frac1{\varepsilon\sqrt{h_{\rm
ex}}}+C\Omega|\mathcal K_j|\right)\,.\end{equation} Recall that, as
$\varepsilon\to0_+$, the number of squares $\mathcal K_j$ satisfies
$N=|\mathcal U_{2L}|\times\ell^2\Omega\big(1+o(1)\big)$. Since
$|\mathcal K_j|=\displaystyle\frac1{\ell^2\Omega}$ for every $j$,
then $\displaystyle\sum_{j\in\mathcal J}|\mathcal K_j|=|\mathcal
U_{2L}|\big(1+o(1)\big)\,$. Also, all the extra terms appearing in
\eqref{eq-C-ep-1} are $o(1)$ as $\varepsilon\to0_+$, and this leads
one to,
\begin{align*}
C_{\varepsilon,1}&\leq
\big(1+o(1)\big)\sum_{j\in\mathcal J_0} \frac1{|\mathcal K_j|}p_{\varepsilon\Omega}(x_j)\ell^2\Omega h_{\rm ex}\ln\frac1{\varepsilon\sqrt{h_{\rm ex}}}\\
&=\big(1+o(1)\big)\Omega\ln\frac1{\varepsilon\sqrt{\Omega}}\sum_{j\in\mathcal J_0} \frac1{|\mathcal K_j|}a(x_j)\,.
\end{align*}
Since each point $x_j$ is arbitrarily selected in the square
$\mathcal K_j$, then the sum $\sum_j \frac1{|\mathcal
K_j|}p_{\varepsilon\Omega}(x_j)$ becomes a Riemann sum. Select the
points $(x_j)$ such that the sum is a lower Riemann sum. That way,
$$\sum_{j\in\mathcal J'} \frac1{|\mathcal K_j|}p_{\varepsilon\Omega}(x_j)\leq
\int_{|x|_{\widetilde\Lambda_{\varepsilon\Omega}}\leq \sqrt{\alpha_{\varepsilon\Omega}}\,-\delta}p_{\varepsilon\Omega}(x)\,dx
\leq\int_{p_{\varepsilon\Omega}(x)>0}p_{\varepsilon\Omega}(x)\,dx=1 \,.$$ As a consequence, the term $C_{\varepsilon,1}$
satisfies,
\begin{equation}\label{eq-C-ep-1'}
C_{\varepsilon,1}\leq \big(1+o(1)\big)\Omega\ln\frac1{\varepsilon\sqrt{\Omega}}\quad{\rm as~}\varepsilon\to0_+\,.
\end{equation}

\subsubsection*{The term $C_{\varepsilon,2}$:}
To estimate the term $C_{\varepsilon,2}$, it is used the result of
Theorem~\ref{thm:IM0'} that the function
$\widetilde\eta_\varepsilon$ is bounded independently of
$\varepsilon$ to get that,
$$C_{\varepsilon,2}\leq C\int_{\sqrt{\alpha_{\varepsilon\Omega}}\,-\delta\leq|x|_{\widetilde\Lambda_{\varepsilon\Omega}}\leq \sqrt{\alpha_{\varepsilon\Omega}}\,+\delta}\left(|(\nabla-i\Omega\Ab_0)\widetilde
v|^2+\frac1{2\varepsilon^2}(1-|\widetilde v|^2)^2\right)\,dx\,.$$
The definition of $\widetilde v$ and the estimate in \eqref{eq-nv}
together yield,
\begin{multline*}C_{\varepsilon,2}\leq
C(1+C\varepsilon^2\Omega)\int_{\sqrt{\alpha_{\varepsilon\Omega}}\,-\delta\leq|x|_{\widetilde\Lambda_{\varepsilon\Omega}}\leq
\sqrt{\alpha_{\varepsilon\Omega}}\,+\delta}\left(|(\nabla-i\Omega\Ab_0)
v|^2+\frac1{2\varepsilon^2}(1-| v|^2)^2\right)\,dx
 \\+C\Omega\int_{\sqrt{\alpha_{\varepsilon\Omega}}\,-\delta\leq|x|_{\widetilde\Lambda_{\varepsilon\Omega}}\leq \sqrt{\alpha_{\varepsilon\Omega}}\,+\delta}|v|^2\,dx\,.
\end{multline*}
The function $\chi$ is equal to $1$ in
$\{{\sqrt{\alpha_{\varepsilon\Omega}}\,-\delta\leq|x|_{\widetilde\Lambda_{\varepsilon\Omega}}\leq
\sqrt{\alpha_{\varepsilon\Omega}}\,+\delta}\}\subset\mathcal U_L$.
As a consequence $v(x)=f_\varepsilon(\ell\sqrt{\Omega}\,x)$. As is
done for the term $C_{\varepsilon,1}$, one gets that,
\begin{equation}\label{eq-C-ep-2}
C_{\varepsilon,2}\leq C\big(1+o(1)\big)
\left(\int_{\sqrt{\alpha_{\varepsilon\Omega}}\,-\delta\leq|x|_{\widetilde\Lambda_{\varepsilon\Omega}}\leq \sqrt{\alpha_{\varepsilon\Omega}}\,+\delta}\,dx\right)\,\Omega\ln\frac{1}{\varepsilon\sqrt{\Omega}}
\leq C\delta\Omega\ln\frac{1}{\varepsilon\sqrt{\Omega}}\,.\end{equation}

\subsubsection*{The term $C_{\varepsilon,3}$:}
When
${\sqrt{\alpha_{\varepsilon\Omega}}\,+\delta\leq|x|_{\widetilde\Lambda_{\varepsilon\Omega}}\leq2L}$,
the function $\chi$ is no more constant and the function $v$ is {\it
small}. As a consequence, it is not useful to estimate the
`Ginzburg-Landau' energy of $v$ along the  same procedure as done
before. However, as Theorem~\ref{thm:IM0} states, the function
$\widetilde\eta_\varepsilon$ decays exponentially, and this will be
the key to estimate the term $C_{\varepsilon,3}$. Thanks to
\eqref{eq-nv}, the function $\widetilde v$ satisfies the uniform
inequality $|1-|\widetilde v|^2|\leq1+\mathcal
O(\varepsilon^2\Omega)$. This and the exponential decay of
$\widetilde\eta_\varepsilon$ in Theorem~\ref{thm:IM0} together yield
when $\varepsilon\to0_+$,
$$\frac1{2\varepsilon^2}\int_{\sqrt{\alpha_{\varepsilon\Omega}}\,+\delta\leq|x|_{\widetilde\Lambda_{\varepsilon\Omega}}\leq2L}\widetilde\eta_\varepsilon^4(1-|\widetilde
v|^2)^2\,dx\leq
C\frac1{\varepsilon^2}\exp\left(-\frac\delta{\varepsilon^{1/2}}\right)\int_{\sqrt{a_0}+1/2\leq|x|_\Lambda\leq
\sqrt{a_0}+1}\,dx =o(1)\,.$$ Using a similar
reasoning, the kinetic energy term is estimated as follows,
\begin{align*}
\int_{\sqrt{\alpha_{\varepsilon\Omega}}\,+\delta\leq|x|_{\widetilde\Lambda_{\varepsilon\Omega}}\leq2L}&\widetilde\eta_\varepsilon^2|(\nabla-i\Omega\Ab_0)\widetilde
v|^2\,dx\\
 &\leq
C\exp\left(-\frac\delta{\varepsilon^{1/2}}\right)\int_{\sqrt{\alpha_{\varepsilon\Omega}}\,+\delta\leq|x|_{\widetilde\Lambda_{\varepsilon\Omega}}\leq2L}\Big(|(\nabla-i\Omega\Ab_0) v|^2+|\nabla\chi|^2|v|^2\Big)\,dx\\
&\leq C\exp\left(-\frac\delta{\varepsilon^{1/2}}\right)\Omega\ln\frac{1}{\varepsilon\sqrt{\Omega}}=o(1)\,,
\end{align*}
thereby obtaining that $C_{\varepsilon,3}=o(1)$ as
$\varepsilon\to0_+$.

\subsubsection*{The term $C_{\varepsilon,4}$:}
Recall the definition of this term in \eqref{eq-C-ep4} and that  the
function $\widetilde v=0$ here. As a consequence,
$C_{\varepsilon,4}= \displaystyle\int_{|x|_\Lambda\geq
\sqrt{a_0}+1}\frac{\widetilde\eta_\varepsilon^4}{2\varepsilon^2}\,dx$
and this is equal to $o(1)$ as $\varepsilon\to0_+$ after  using the
exponential decay of $\widetilde\eta_\varepsilon$ stated in
Theorem~\ref{thm:IM0'}.

\subsection*{Conclusion:} Collecting the estimates
$C_{\varepsilon,4}=o(1)$, $C_{\varepsilon,3}=o(1)$,
\eqref{eq-C-ep-2} and \eqref{eq-C-ep-1} and inserting them into
\eqref{eq-C-ep} yields an upper bound of $C_\varepsilon$. Inserting
this bound  into the expression of $\mathcal
G_\varepsilon(\widetilde v)$ yields the upper bound
$$C_0(\varepsilon,\Omega)\leq\left(1+C\delta+o(1)\right)\Omega\ln\frac1{\varepsilon\sqrt{\Omega}}
+o(1)\,,$$ as $\varepsilon\to0_+$. This yields \eqref{eq-ub} by taking the successive limits as ${\varepsilon\to0_+}$ and then as ${\delta\to0_+}$.
\section{Lower Bound}\label{sec:lb}

Suppose that $v$ is a minimizer of the functional $\mathcal
G_\varepsilon$ introduced in \eqref{eq:G}, and that the rotational
speed $\Omega$ satisfies the assumption of Theorem~\ref{thm:K}. The
aim of this section is to write a lower bound of $\mathcal
G_\varepsilon(v)$.

The assumption on the rotational speed is still
$|\ln\varepsilon|\ll\Omega\leq M/\varepsilon$ with $0<M<2\Lambda$.
Consider a positive constant
$$0<\delta<\sqrt{a_0\left(1-\frac{M^2}{4\Lambda^2}\right)^{1/4}}$$
and the following subset of $\mathcal D_{\varepsilon\Omega}$,
$$\mathcal U_{\delta}=\{x\in\R^2~:~|x|_{\widetilde\Lambda_{\varepsilon\Omega}}\leq \sqrt{\alpha_{\varepsilon\Omega}}\,-\delta\}\,,$$
where $\alpha_{\varepsilon\Omega}$ and
$\widetilde\Lambda_{\varepsilon\Omega}$ are introduced in
\eqref{eq:pe}.

Recall the lattice of squares $\mathcal K_j$ introduced in
Section~\ref{sec:ub}. The parameters $\ell$ and $h_{\rm ex}$ are
still as in \eqref{eq:lh}. Put
\begin{equation}\label{J'}
\mathcal J'=\{j~:~\mathcal K_j\subset\mathcal U_\delta\}\,.
\end{equation}
There holds the obvious lower bound,
\begin{equation}\label{eq-lb-gl}
\begin{aligned}
&\int_{\R^2}\left(\widetilde\eta_\varepsilon^2|(\nabla-i\Omega\Ab_0)v|^2
+\frac{\widetilde\eta_\varepsilon^4}{\varepsilon^2}(1-|v|^2)^2\right)\,dx\\
&\qquad\qquad\geq
 \int_{\mathcal U_\delta}\left(\widetilde\eta_\varepsilon^2|(\nabla-i\Omega\Ab_0)v|^2
+\frac{\widetilde\eta_\varepsilon^4}{2\varepsilon^2}(1-|v|^2)^2\right)\,dx\\
&\qquad\qquad\geq\sum_{j\in\mathcal J'}\int_{\mathcal K_j}\left(\widetilde\eta_\varepsilon^2|(\nabla-i\Omega\Ab_0)v|^2
+\frac{\widetilde\eta_\varepsilon^4}{2\varepsilon^2}(1-|v|^2)^2\right)\,dx\,.
\end{aligned}
\end{equation}
\subsection*{Lower bound of the `Ginzburg-Landau' energy:}
For each $j\in\mathcal J'$, it will be obtained a lower  bound  of
the term,
\begin{equation}\label{eq-lb-func}
\mathcal G_\varepsilon(v,\mathcal K_j)=\int_{\mathcal K_j}\left(\widetilde\eta_\varepsilon^2|(\nabla-i\Omega\Ab_0)v|^2
+\frac{\widetilde\eta_\varepsilon^2}{2\varepsilon^2}(1-|v|^2)^2\right)\,dx\,.
\end{equation}
By  Theorem~\ref{thm:IM0'}, one can write for an arbitrary point
$x_j$ in $\mathcal K_j$,
$$\widetilde\eta_{\varepsilon}^2(x)\geq
(1-C\varepsilon^{1/3})p_{\varepsilon\Omega}(x)\geq \left(1-C\varepsilon^{1/3}-\frac{C}{\ell\sqrt{\Omega}}\right)p_{\varepsilon\Omega}(x_j)\quad{\rm in~}\mathcal K_j\,,$$
and consequently,
\begin{equation}\label{eq-lb-gl-j}
\mathcal G_\varepsilon(v,\mathcal K_j)\geq \left(1-C\varepsilon^{1/3}-\frac{C}{\ell\sqrt{\Omega}}\right)
\int_{\mathcal K_j}\left(p_{\varepsilon\Omega}(x_j)|(\nabla-i\Omega\Ab_0)v|^2
+\frac{p_{\varepsilon\Omega}(x_j)^2}{2\varepsilon^2}(1-|v|^2)^2\right)\,dx\,.
\end{equation}
Let $y_j$ be the center of the square $\mathcal K_j$,
$K=(-1/2,1/2)^2$, $\tilde\varepsilon=\ell\sqrt{\Omega}\,\varepsilon$
and $h_{\rm ex}=1/\ell^2$. Using the re-scaled function
$f(x)=v(y_j+\ell\sqrt{\Omega}\,x)$\,, $(x\in K)\,$, it is possible
to express   \eqref{eq-lb-gl-j} in the following form,
\begin{equation}\label{eq-lb-gl-j'}
\mathcal G_\varepsilon(v,\mathcal K_j)\geq \left(1-C\varepsilon^{1/3}-\frac{C}{\ell\sqrt{\Omega}}\right)p_{\varepsilon\Omega}(x_j)
\int_{K}\left(|(\nabla-ih_{\rm ex}\Ab_0)f|^2+\frac{p_{\varepsilon\Omega}(x_j)}{2\tilde\varepsilon^2}(1-|f|^2)^2\right)\,dx\,.
\end{equation}
Notice that the term $p_{\varepsilon\Omega}(x_j)$ remains in a
constant interval $[\lambda_1,\lambda_2]$ as $j\in\mathcal J'$ and
$\varepsilon$ vary. Also, as $\varepsilon\to0$, $\tilde\varepsilon$
and $h_{\rm ex}$ satisfy $|\ln\tilde\varepsilon|\ll h_{\rm ex}\ll
\tilde\varepsilon^{-2}$. Thus, it is possible to bound the integral
on the right side of \eqref{eq-lb-gl-j'} by the ground state energy
$m_0(h_{\rm ex},\tilde\varepsilon)$ in \eqref{eq-m0}, which is
estimated from below in Theorem~\ref{thm-SS-p}. Therefore, it is
inferred from \eqref{eq-lb-gl-j'},
\begin{equation}\label{eq-lb-gl-j''}
\mathcal G_\varepsilon(v,\mathcal K_j)\geq \big(1+o(1)\big)
p_{\varepsilon\Omega}(x_j)h_{\rm ex}\ln\frac1{\tilde\varepsilon\sqrt{h_{\rm ex}}}=\big(1+o(1)\big)
p_{\varepsilon\Omega}(x_j)\frac1{\ell^2}\ln\frac1{\varepsilon\sqrt{\Omega}}\,.
\end{equation}
Inserting this into \eqref{eq-lb-func} and then into
\eqref{eq-lb-gl} yields,
\begin{equation}\label{eq-lb-gl'}
\int_{\R^2}\left(\widetilde\eta_\varepsilon^2|(\nabla-i\Omega\Ab_0)v|^2
+\frac{\widetilde\eta_\varepsilon^2}{2\varepsilon^2}(1-|v|)^2\right)\,dx\geq
\big(1+o(1)\big)\Omega\ln\frac1{\varepsilon\sqrt{\Omega}}\sum_{j\in\mathcal J'}\frac1{\ell^2\Omega}\,p_{\varepsilon\Omega}(x_j).
\end{equation}
The sum on the right side of \eqref{eq-lb-gl'} is estimated as
follows. As $\varepsilon\to0_+$, the term
$\displaystyle\sum_{j\in\mathcal J'}\frac1{\ell^2\Omega}\,a(x_j)$ is
a Riemann sum. Select the points $(x_j)$ such that the sum is an
upper Riemann sum. As a consequence, there holds,
\begin{align*}
\sum_{j\in\mathcal J'}p_{\varepsilon\Omega}(x_j)h_{\rm
ex}\ln\frac1{\tilde\varepsilon\sqrt{h_{\rm ex}}}&=\Omega\ln\frac1{\varepsilon\sqrt{\Omega}}\sum_{j\in\mathcal J'}\frac1{\ell^2\Omega}\,p_{\varepsilon\Omega}(x_j)\\
&=\Omega\ln\frac1{\varepsilon\sqrt{\Omega}}\int_{\mathcal
U_{2\delta}}p_{\varepsilon\Omega}(x)\,dx\,.
\end{align*}
Therefore, it results from \eqref{eq-lb-gl'},
\begin{equation}\label{eq-lb-gl-conc}
\int_{\R^2}\left(\widetilde\eta_\varepsilon^2|(\nabla-i\Omega\Ab_0)v|^2
+\frac{\widetilde\eta_\varepsilon^2}{2\varepsilon^2}(1-|v|)^2\right)\,dx\geq
\Omega\ln\frac1{\varepsilon\sqrt{\Omega}}\left(\int_{\mathcal U_{2\delta}}p_{\varepsilon\Omega}(x)\,dx\right)\,.
\end{equation}
Recall that the function $p_{\varepsilon\Omega}$ in \eqref{eq:pe}
satisfies
$\displaystyle\int_{p_{\varepsilon\Omega}(x)>0}p_{\varepsilon\Omega}(x)\,dx=1$.
Thus,
$$\int_{\mathcal
U_{2\delta}}p_{\varepsilon\Omega}(x)\,dx=\int_{p_{\varepsilon\Omega}(x)>0}p_{\varepsilon\Omega}(x)\,dx-\int_{p_{\varepsilon\Omega}(x)>2\delta}p_{\varepsilon\Omega}(x)\,dx
\geq 1-C\delta\,.$$ That way, \eqref{eq-lb-gl-conc} becomes,
\begin{equation}\label{eq-lb-gl-conc'}
\int_{\R^2}\left(\widetilde\eta_\varepsilon^2|(\nabla-i\Omega\Ab_0)v|^2
+\frac{\widetilde\eta_\varepsilon^2}{2\varepsilon^2}(1-|v|)^2\right)\,dx\geq
\Omega\ln\frac1{\varepsilon\sqrt{\Omega}}\left(1-C\delta\right)\,.
\end{equation}

\subsection*{Conclusion:}
It is obtained by collecting the estimate in 
\eqref{eq-lb-gl-conc'},
$$C_0(\varepsilon,\Omega)\geq\Omega\ln\frac1{\varepsilon\sqrt{\Omega}}\left(1-C\delta\right)\,.$$
As a consequence, it is obtained by taking the limit as
$\varepsilon\to0_+$,
$$\liminf_{\varepsilon\to0_+}\frac{C_0(\varepsilon,\Omega)}{\Omega\ln\frac1{\varepsilon\sqrt{\Omega}}}\geq
1-C\delta\,.$$ By Taking $\delta\to 0_+$, it results the lower bound:
$$\liminf_{\varepsilon\to0_+}\frac{C_0(\varepsilon,\Omega)}{\Omega\ln\frac1{\varepsilon\sqrt{\Omega}}}\geq
1\,.$$
The conclusion of this section and Section~\ref{sec:ub} finishes the
proof of Theorem~\ref{thm:K}.

\begin{rem}\label{eq-rgl}
If $U\subset \mathcal D_{\varepsilon\Omega}$  and $u\in H^1(U)$,
define the local energy:
$$\mathcal
E_\varepsilon(u;U)=\int_{U}\left(\widetilde\eta_\varepsilon^2|(\nabla-i\Ab_0)u|^2+\frac{\widetilde\eta_\varepsilon^4}{2\varepsilon^2}(1-|u|^2)^2\right)\,dx\,.$$
The analysis of this section allows one to prove the following. If
$v$ is a minimizer of \eqref{eq-gse:G}, $U\subset\mathcal
D_{\varepsilon\Omega}$ is an open set, $\overline{U}\subset\mathcal
D_{\varepsilon\Omega}$, $|\partial U|=0$, and $U$ is independent of
$\varepsilon$ and $\Omega$, then,
$$\mathcal E_\varepsilon(v;U)\geq
\Omega\ln\frac1{\varepsilon\sqrt{\Omega}}\left(\int_Up_{\varepsilon\Omega}(x)\,dx+o(1)\right)\quad{\rm
as ~}\varepsilon\to0_+\,.$$ Combine this lower bound with  the upper
bound \eqref{eq-ub} to obtain the `local' energy asymptotics:
$$\mathcal E_\varepsilon(v;U)=
\Omega\ln\frac1{\varepsilon\sqrt{\Omega}}\left(\int_Up_{\varepsilon\Omega}(x)\,dx+o(1)\right)\quad{\rm as ~}\varepsilon\to0_+\,.$$
\end{rem}

\section{Vortices and their density}\label{sec:v}
The assumption on the rotational speed is as in Theorem~\ref{thm:K}.
Recall the definition of the domain $\mathcal D$ in \eqref{eq:D}.
Let $\beta>0$. Suppose that $U$ is an open set in $\R^2$ satisfying
the properties in Remark~\ref{eq-rgl} and
$${\rm dist}(U,\partial\mathcal D_{\varepsilon\Omega})\geq\beta\,.$$
According to Theorem~\ref{thm:IM0'}, the function
$\widetilde\eta_\varepsilon$ satisfies the pointwise bound
$\widetilde\eta_\varepsilon\geq c_0(U)>0$ in $U$. The constant
$c_0(U)$ depends only on $U$.

Let $v$ be a minimizer of \eqref{eq-gse:G}. By borrowing the results
of \cite{SS, SS00}, it will be given some details regarding the
location and `density' of the zeros of the minimizer $v$ inside $U$.

Consider the lattice of squares $(\mathcal K_j)$ generated by the
square $\mathcal K=(-\delta,\delta)\times(-\delta,\delta)$, where
$\delta=\frac12\big(|\ln\varepsilon|/\Omega\big)^{-1/4}$. Suppose
that  $x_j$ is the center of the square $\mathcal K_j$.

By Theorem~\ref{thm-SS-p}, there exists a positive function
$g(\varepsilon)$ such that, as $\varepsilon\to0_+$,
$g(\varepsilon)\ll1$ and
$${\rm GL}_\varepsilon(v;\mathcal K_j):=
\int_{\mathcal
K_j}\left(|(\nabla-i\Omega\Ab_0)v|^2+\frac{\widetilde\eta_\varepsilon^2(x_j)}{2\varepsilon^2}(1-|v|^2)^2\right)\,dx\geq
\big(1-g(\varepsilon)\big)\Omega\delta^2\ln\frac1{\varepsilon\sqrt{\Omega}}\,.$$
  One distinguishes between {\it good}
squares and {\it bad} squares in $U$; {\it good} squares are those
satisfying that
$${\rm GL}_\varepsilon(v;\mathcal K_j):=
\int_{\mathcal
K_j}\left(|(\nabla-i\Omega\Ab_0)v|^2+\frac{\widetilde\eta_\varepsilon^2(x_j)}{2\varepsilon^2}(1-|v|^2)^2\right)\,dx\leq
\big(1+\sqrt{g(\varepsilon)}\,\big)\Omega\delta^2\ln\frac1{\varepsilon\sqrt{\Omega}}\,,$$
while {\it bad} squares satisfy the reverse condition that ${\rm
GL}_\varepsilon(v;\mathcal K_j)>
\Omega\delta^2\big(1+\sqrt{g(\varepsilon)}\,\big)\ln\frac1{\varepsilon\sqrt{\Omega}}$.
The number of {\it bad} squares $N_{\rm b}$ is small compared to the
number of {\it good} squares $N_{\rm g}$, namely $N_{\rm b}\ll
N_{\rm g}$ as $\varepsilon\to0_+$. Proposition~5.1 in \cite{SS00}
gives one  the following. There exists a constant $C>0$ and a
positive function $\hat g(\varepsilon)$ such that, if $\mathcal K_j$
is a {\it good} square
then there exists a finite family of discs
$\big(B(a_{i,j},r_{i,j})\big)_i$ with the following properties,
\begin{enumerate}
\item $\displaystyle\sum_i r_{i,j}\leq  C\Omega^{-1/2}$\,;
\item $\{x\in \mathcal K_j~:~|v(x)|<\frac12\}\subset
\displaystyle\bigcup_{i} B(a_{i,j},r_{i,j})$\,;
\item If $d_{i,j}$ is the winding number of $v/|v|$ when
$B(a_{i,j},r_{i,j})\subset \mathcal K_j$ and $0$ otherwise,
then,
$$
\sum_{i}d_{i,j}\geq
\Omega\delta^2\big(1-\hat g(\varepsilon)\big)\quad {\rm and}\quad\sum_{i}|d_{i,j}|\leq
\Omega\delta^2\big(1+\hat g(\varepsilon)\big)\,.$$
\item $\hat g(\varepsilon)\ll 1$ as $\varepsilon\to0_+$.
\end{enumerate}
Let $\mathcal J_{\rm g}$ be the collection of all indices $j$ such
that $\mathcal K_j$ is a good square and $\mathcal K_j\subset U$.
Define the measure \begin{equation}\label{eq-vm} \mathcal
\mu_\varepsilon=\sum_{\substack{i,j\\ j\in\mathcal J_{\rm g}}}
\,d_{i,j}\delta_{a_{i,j}}\,,\end{equation} where $\delta_{a_{i,j}}$
is the dirac measure supported at $a_i$. The measure
$\mu_\varepsilon$ is called the vorticity measure in $U$: It
indicates the existence of vortices (when $\mu_\varepsilon\not=0$),
its support indicates the location of vortices, and its norm
indicates their density.

Notice that the aforementioned construction indicates the location
and density of vortices for  minimizers of \eqref{eq-gse}, since
$v=u/\widetilde\eta_\varepsilon$ and $u$ is a minimizer of
\eqref{eq-gse}. Thus, $v$ and $u$ have the same  zeros (vortices).

It is possible to prove that:
\begin{thm}\label{thm:v}
Under the assumption of Theorem~\ref{thm:K}, the vorticity measure
in $U$ fulfills the weak convergence:
$$\frac1{\Omega}\,\mu_\varepsilon\rightharpoonup \mathbf 1_{U}\,dx\quad{\rm
as}~\varepsilon\to0_+,$$ where $dx$ is the Lebesuge measure in $\R^2$ and $\mathbf 1_{U}$ the characteristic function of $U$.
\end{thm}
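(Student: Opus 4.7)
The plan is to show that for every $\varphi\in C_c(\R^2)$,
$$\frac1\Omega\int_{\R^2}\varphi\,d\mu_\varepsilon=\frac1\Omega\sum_{j\in\mathcal J_{\rm g}}\sum_i d_{i,j}\,\varphi(a_{i,j})\longrightarrow\int_U\varphi(x)\,dx$$
as $\varepsilon\to 0_+$, by reducing the left-hand side to a Riemann sum over the lattice of good squares.

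The first step is to replace $\varphi(a_{i,j})$ by $\varphi(x_j)$, where $x_j$ denotes the centre of $\mathcal K_j$. Uniform continuity of $\varphi$ together with $\operatorname{diam}(\mathcal K_j)=\sqrt2\,\delta\to 0$ (recall $\delta=\tfrac12(|\ln\varepsilon|/\Omega)^{-1/4}$), and the absolute-degree bound $\sum_i|d_{i,j}|\le\Omega\delta^2(1+\hat g(\varepsilon))$ from property~(3), control the resulting error, after division by $\Omega$ and summation over $\mathcal J_{\rm g}$, by $\omega_\varphi(\delta)\,(1+\hat g)\,\delta^2\,N_{\rm g}=O(\omega_\varphi(\delta))=o(1)$, since $N_{\rm g}\delta^2$ is bounded.

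The second step exploits the signed-degree bounds. The inequalities $\Omega\delta^2(1-\hat g)\le\sum_i d_{i,j}\le\sum_i|d_{i,j}|\le\Omega\delta^2(1+\hat g)$ yield $\frac1\Omega\sum_i d_{i,j}=\delta^2(1+o(1))$ uniformly in $j\in\mathcal J_{\rm g}$. Combining with Step~1 leaves the Riemann-type expression
$$(1+o(1))\sum_{j\in\mathcal J_{\rm g}}\varphi(x_j)\,\delta^2.$$
It differs from the full Riemann sum $\sum_{j:\mathcal K_j\subset U}\varphi(x_j)\,\delta^2$ by at most $\|\varphi\|_\infty\,N_{\rm b}\,\delta^2$, and the full sum converges to $\int_U\varphi\,dx$ by Lebesgue's criterion for Riemann integrability (using $|\partial U|=0$ from Remark~\ref{eq-rgl}), provided one can establish $N_{\rm b}\,\delta^2\to 0$.

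The main obstacle is precisely the bound $N_{\rm b}\delta^2\to 0$, equivalently $N_{\rm b}\ll N_{\rm g}$ since $N_{\rm g}\delta^2\to|U|>0$. I would derive it from the sharp local energy asymptotic of Remark~\ref{eq-rgl} by isolating the bad-square excess: each bad square contributes at least $(1+\sqrt{g(\varepsilon)})\,\Omega\delta^2\ln\frac1{\varepsilon\sqrt{\Omega}}$ to $\mathrm{GL}_\varepsilon(v;\mathcal K_j)$, exceeding the generic lower bound $(1-g)\,\Omega\delta^2\ln\frac1{\varepsilon\sqrt{\Omega}}$ by an amount of order $\sqrt{g}\,\Omega\delta^2\ln\frac1{\varepsilon\sqrt{\Omega}}$. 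Weighting by $p_{\varepsilon\Omega}(x_j)$, which by the distance hypothesis $\operatorname{dist}(U,\partial\mathcal D_{\varepsilon\Omega})\ge\beta$ is bounded above and away from $0$ on $U$, summing, and comparing with the upper bound $\mathcal E_\varepsilon(v;U)=\Omega\ln\frac1{\varepsilon\sqrt{\Omega}}\bigl(\int_U p_{\varepsilon\Omega}\,dx+o(1)\bigr)$, one forces $\sqrt{g(\varepsilon)}\,N_{\rm b}\,\delta^2=o(1)$. Choosing $g(\varepsilon)$ to decay slowly enough relative to the error term in Remark~\ref{eq-rgl} then gives $N_{\rm b}\,\delta^2=o(1)$, and combining with Steps~1 and~2 completes the proof.
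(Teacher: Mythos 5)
Your proposal is correct, and it reaches the conclusion by a more hands-on route than the paper. The paper's own proof is soft: it first uses the bound $\sum_i|d_{i,j}|\leq\Omega\delta^2\big(1+\hat g(\varepsilon)\big)$ together with the fact that the number of squares is $O(\delta^{-2})$ to get a uniform total-variation bound on $\Omega^{-1}\mu_\varepsilon$, extracts a weakly convergent subsequence, and then identifies the limit by sandwiching $\mu_\varepsilon(S)$, for open sets $S$ with $|\partial S|=0$, between the two one-sided degree estimates of property (3), using $N_{\rm g}\,\delta^2=|U|+o(1)$. You instead test directly against $\varphi\in C_c(\R^2)$ and run a Riemann-sum argument; the ingredients are identical (the two-sided degree bounds on good squares, negligibility of bad squares, $|\partial U|=0$), so the substance coincides, but your version avoids the compactness/subsequence step and yields the convergence for all test functions at once. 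The genuine added value in your write-up is the last paragraph: the paper simply asserts $N_{\rm b}\ll N_{\rm g}$ before the theorem and never proves it, whereas you sketch the standard energy-excess derivation --- each bad square carries an excess of order $\sqrt{g(\varepsilon)}\,\Omega\delta^2\ln\frac1{\varepsilon\sqrt{\Omega}}$, which, weighted by $p_{\varepsilon\Omega}\geq c>0$ on $U$ (guaranteed by $\dist(U,\partial\mathcal D_{\varepsilon\Omega})\geq\beta$) and compared with the matching local upper bound of Remark~\ref{eq-rgl}, forces $\sqrt{g(\varepsilon)}\,N_{\rm b}\,\delta^2=o(1)$, and a suitably slowly decaying choice of $g$ upgrades this to $N_{\rm b}\,\delta^2=o(1)$ --- precisely the step the paper leaves implicit. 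Two cosmetic remarks: in your Step 2 the multiplicative factor $(1+o(1))$ in front of a sum whose sign is not fixed should be recorded as an additive error bounded by $\hat g(\varepsilon)\sum_{j}|\varphi(x_j)|\,\delta^2=o(1)$, which the uniformity of $\hat g$ in $j$ makes immediate; and you have tacitly adopted the paper's own normalization in which $\delta^2$ plays the role of the cell area in property (3), which is consistent with the statement being proved, so no harm results.
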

\begin{proof}
Notice that the upper bound in (3) and the fact that the number of
indices $j$ is asymptotically proportional to $\delta^{-2}$ together
yield that $\Omega^{-1}\sum_{i,j}|d_{i,j}|$ is bounded independently
of $\varepsilon$ and $\Omega$. Consequently, by passing to a
subsequence, one can suppose that $\Omega^{-1}\mu_\varepsilon$
converges weakly to a measure $\mu$. It suffices to prove that
$\mu=\mathbf 1_{U}\,dx$.

 Since the number of good
squares satisfies $N_{\rm g}\times \delta^2=|U|+o(1)$ as
$\varepsilon\to0_+$, then the  two-sided estimate of $\sum_{i,j}
d_{i,j}$ in (3) above leads  to the following. If $S$ is an open set
in $U$ and $|\partial S|=0$, then
\begin{multline*}\Omega|S|\big(1+o(1)\big)\leq
\sum_{i,j}d_{i,j}\leq \big(1+o(1)\big)\mu_\varepsilon(S)\\\leq
\big(1+o(1)\big)\sum_{i,j}|d_{i,j}|\leq\Omega|S|\big(1+o(1)\big)\,,\quad{\rm
as~}\varepsilon\to0_+\,.\end{multline*} This proves that
$\Omega^{-1}\mu_\varepsilon$ converges weakly to the Lebesgue
measure restricted to $U$.
\end{proof}

\section*{Acknowledgements} The research of the author is partially
supported by a grant of Lebanese University.

\end{document}